\newcommand{\arXiv}[1]{\href{http://www.arXiv.org/abs/#1}{arXiv:#1}}
\renewcommand\section{\@startsection {section}{1}{\z@}%
	{-3.5ex \@plus -1ex \@minus -.2ex}
	{2.3ex \@plus.2ex}%
	{\normalfont\large\bfseries}}
\renewcommand\subsection{\@startsection{subsection}{2}{\z@}%
	{-3.25ex\@plus -1ex \@minus -.2ex}%
	{1.5ex \@plus .2ex}%
	{\normalfont\bfseries}}
\newcommand{\beq}{\begin{equation}}
\newcommand{\eeq}{\end{equation}}
\newcommand{\beqnn}{\begin{equation*}}
\newcommand{\eeqnn}{\end{equation*}}
\newcommand{\ber}{\begin{array}}
	\newcommand{\eer}{\end{array}}
\newcommand{\del}{\partial}
\newcommand{\ssty}{\scriptstyle}
\newcommand{\te}{\theta}
\newcommand{\de}{\delta}
\newcommand{\ena}{\end{eqnarray}}
\newcommand{\beqa}{\begin{eqnarray}}
\newcommand{\eeqa}{\end{eqnarray}}
\newcommand{\bea}{\begin{eqnarray}}
\newcommand{\eea}{\end{eqnarray}}
\newcommand{\al}{\alpha}
\newcommand{\alb}{\bar{\alpha}}
\newcommand{\Ham}{\mathcal{H}}
\newcommand{\Sz}{Szeg\H{o}}
\newcommand{\cuSz}{cubic \Sz{} equation}
\theoremstyle{proposition}
\newtheorem{proposition}{Proposition}[section]
\theoremstyle{remark}
\begin{document}
	\begin{titlepage}
		\begin{flushright}
			\phantom{arXiv:yymm.nnnn}
		\end{flushright}
		\vspace{1cm}
		\begin{center}
			{\Large\bf Turbulent cascades in a truncation of the\vspace{3mm}\\ cubic \Sz{}  equation and related systems}\\
			\vskip 15mm
			{\large Anxo Biasi$^{1}$ and Oleg Evnin$^{2,3}$}
			\vskip 7mm
			{\em $^1$  Institute of Theoretical Physics, Jagiellonian University, Krak\'ow, Poland}
			\vskip 3mm
			{\em $^2$ Department of Physics, Faculty of Science, Chulalongkorn University,
				Bangkok, Thailand}
			\vskip 3mm
			{\em $^3$ Theoretische Natuurkunde, Vrije Universiteit Brussel and\\
				The International Solvay Institutes, Brussels, Belgium}
			\vskip 7mm
			{\small\noindent {\tt anxo.biasi@gmail.com, oleg.evnin@gmail.com}}
			\vskip 10mm
		\end{center}
		\vspace{1cm}
		\begin{center}
			{\bf ABSTRACT}\vspace{3mm}
		\end{center}
		
		\noindent We introduce a truncated version of the cubic \Sz{} equation, an integrable model for deterministic turbulence. In this truncation, a majority of the Fourier mode couplings are eliminated while the signature features of the model are preserved, namely, a Lax pair structure and a hierarchy of finite-dimensional dynamically invariant manifolds. Despite the impoverished structure of the interactions, the turbulent behaviors of our new equation are stronger in an appropriate sense than for the original cubic \Sz{} equation. We construct explicit analytic solutions displaying exponential growth of Sobolev norms. We furthermore introduce a family of models that interpolate between our truncated system and the original cubic \Sz{} equation, along with other related deformations. These models possess Lax pairs, invariant manifolds, and display a variety of turbulent cascades. We additionally mention numerical evidence, in some related systems, for an even stronger type of turbulence in the form of a finite-time blow-up.
		
		\vfill
		
	\end{titlepage}
	

\begin{flushright}
{\it \hspace{1cm}\\
 ``...the vessel underwent trials on the Paddington Canal in 1837.\\
However, by one of those fortunate accidents, which sometimes\\ 
occur in the history of science and technology, the propeller\\ 
was damaged during the trials and about half of it broke off,\\
whereupon the vessel immediately increased its speed.''} \cite{Carlton}\vspace{1.2cm}
\end{flushright}
	
	\section{Introduction\vspace{3mm}}\label{sec:Intro}
	
	In studies of Hamiltonian PDEs, the phenomenon of turbulence continues to challenge various communities of mathematicians and physicists. It is defined by transfer of energy from long-wavelength to short-wavelength modes, leading to concentration of energy on arbitrarily small spatial scales. Such {\em turbulent cascades} are usually quantified by the growth of Sobolev norms, and the general question of unbounded increase of Sobolev norms is a long-standing problem \cite{Bourgain}. Historically, much of the effort in this direction has focused on the wave turbulence theory \cite{Nazarenko} which employes averaging over the phases of weakly interacting waves. By contrast, studies of fully deterministic turbulence tend to be more recent, with much of the related efforts focused on the {\em nonlinear Schr\"{o}dinger equation} \cite{Hani1,Hani2,Colliander,Kuksin,Merle, Guardia, Guardia2, Guardia3, Guardia4}. Close parallels exist between this line of research and studies of weak turbulence in Anti-de Sitter spacetimes \cite{BR,rev2}, a topic of interest within general relativity and theoretical high-energy physics. 
	
	Motivated by studies of nonlinear Schr\"{o}dinger equations, G\'erard and Grellier designed a tractable model of non-dispersive evolution, the {\em cubic \Sz{} equation} \cite{GG}; see \cite{GG2} for a review. This nonlinear equation on the circle $\mathbb{S}^1$ is Lax-integrable and displays
turbulent behaviors that can be analyzed using integrability. To be more precise, it has been proved that there exist initial conditions with super-polynomial growth of some Sobolev norms; however, there are no explicit examples of such data. Existence of initial conditions with exponential growth of Sobolev norms is another open problem. Explicit solutions can be constructed that show `weak weak turbulence' in the language of \cite{Colliander}, so that the Sobolev norm growth is always bounded but can be arbitrarily enhanced by fine-tuning the initial data. These properties of the cubic \Sz{} equation motivated various studies of modifications of this model in order to elucidate whether such features are preserved and/or some additional phenomena emerge due to modifications. Examples that preserve integrability but display initial configurations with unbounded Sobolev norm growth in the past literature are \cite{Pocovnicu,Xu,Thirouin,GG3}. In \cite{Pocovnicu}, the cubic \Sz{} equation was placed on the real line $\mathbb{R}$, providing solutions with polynomial growth of Sobolev norms. In \cite{Xu}, the so-called {\em $\alpha$-\Sz{} equation} was introduced, with explicit examples of initial configurations with exponentially growing Sobolev norms. A few years later, the quadratic \Sz{} equation was introduced in \cite{Thirouin}. This case also displayed solutions with exponentially growing Sobolev norms. Finally, in \cite{GG3}, we can find the damped \Sz{} equation, likewise with unbounded Sobolev norm growth. 
\newpage	
The cubic \Sz{} equation and the other systems we discuss here belong to the class of cubic resonant systems. These resonant systems often arise as weakly nonlinear approximations to the dynamics of PDEs whose linearized spectra of normal frequencies are strongly resonant.
Thus, it was shown that the cubic \Sz{} equation accurately describes the weakly nonlinear long-time dynamics of some sectors of the half-wave equation \cite{GG,GG2} and the wave guide Schr\"odinger equation \cite{Xu2}. In closer contact with physics applications, resonant systems arise as approximations for the dynamics of Bose-Einstein condensates \cite{GHT,GT,BMP,BBCE,GGT} and in Anti-de Sitter (AdS) spacetimes \cite{FPU,CEV,BMR,CF,BEL,BCE,BEF}, the latter topic extensively studied in relation to AdS instability \cite{BR,rev2}. Resonant systems constructed in this way often show powerful analytic structures and admit special solutions \cite{BBE, breathing}, even in the absence of Lax-integrability that characterizes the cubic \Sz{} equation and other systems that we focus on here. 

As we have mentioned, considerable attention has been given to modifying the cubic \Sz{} equation in a way that strengthens  its turbulent behaviors or makes them more manifest. The main goal of our present exposition is to report our discovery of the {\em truncated \Sz{} equation} that possesses such features, together with systems interpolating between this new equation and the original cubic \Sz{} equation, and some further related deformations.
Relatively to what has been proposed in the literature before, our systems are close in spirit to those of \cite{Xu} and \cite{GG3}, though the concrete alteration we make in the equations is completely different. One might legitimately ask what is the purpose of introducing yet another modification of the cubic \Sz{} equation with such properties. First, for any system as special as the \cuSz, it is important to understand the full range of modifications that preserve its crucial properties (Lax-integrability, invariant manifolds, turbulence), and our results provide an extra four-parameter space of dynamical systems to contribute to this picture. Second, our equation is qualitatively distinct, say, from the modifications introduced in \cite{Xu,GG3}, which are its closest analogs, and may teach us important lessons. The modifications of \cite{Xu, GG3} are designed by adding a single simple term to the \cuSz. By contrast, our truncated \Sz{} equation results from {\it removing} most of the terms from (the Fourier representation of) the \cuSz. As turbulent energy transfer relies on mode couplings, the fact that turbulence is strengthened after removing a majority of mode couplings is surprising and counter-intuitive, and makes us think of the curious naval engineering incident from our epigraph, though the route by which we arrived at our equation was more systematic relatively to the XIX century naval precedent. Apart from their direct relevance in the context of the \cuSz{} studies, our findings may invite general re-evaluation of which mode couplings play an essential role in formation of turbulent cascades.
	
	The article is organized as follows: in section \ref{sec:Szego}, we provide some relevant details on resonant equations in general, as well as the cubic \Sz{} and $\alpha$-\Sz{} equations, which form the background material for our actual studies. In section \ref{sec:TR_Szego}, we introduce and examine our first model, the truncated \Sz{} equation. The Lax pair structure, invariant manifolds and explicit solutions with unbounded Sobolev norms are constructed. In section \ref{sec:beta_Szego}, we do the same with our second model, the $\beta$-\Sz{} equation. We provide summary and commentary, and mention some extra tentative results on finite-time turbulent blow-up in section \ref{sec:Discussion}.
	
	
	\section{Brief review of resonant equations, the cubic \Sz{} equation and the $\alpha$-\Sz{} equation}\label{sec:Szego}

	\subsection{Resonant equations}

The \cuSz{} and the new equations we shall introduce in this paper are all of the following algebraic form:
	\beq
	i\frac{d\alpha_n}{dt}=\hspace{-5mm}\sum_{\begin{array}{c}\vspace{-6mm}\\ \ssty m,k,l=0\vspace{-2mm}\\ \ssty n+m=k+l\end{array}}^{\infty}\hspace{-5mm} C_{nmkl}\bar{\alpha}_m{\alpha}_k\alpha_{l}.
	\label{eq:flow_eq_linear_spectrum}\vspace{-2mm}
	\eeq 
	Here,  $\alpha_n$ with $n=0,1,2,\ldots$ are complex-valued functions of time, which are our dynamical variables, bars denote complex conjugation, and $C_{nmkl}$ are real numbers that can be called the {\em mode couplings} or the {\em interaction coefficients}. The interaction coefficients are invariant under the following interchanges of the indices: $n\hspace{-0.0cm}\leftrightarrow\hspace{-0.0cm}m$, $k\hspace{-0.0cm}\leftrightarrow\hspace{-0.0cm}l$, $(n,m) \hspace{-0.0cm}\leftrightarrow\hspace{-0.0cm}(k,l)$. Such systems are often called {\em resonant systems} or {\em resonant equations}. Note the {\em resonant condition} $n+m = k+l$ restricting the summations in (\ref{eq:flow_eq_linear_spectrum}). Different representatives of this large class of equations that we shall study are distinguished by different explicit choices of the interaction coefficients $C$.

Equations of the form (\ref{eq:flow_eq_linear_spectrum}) commonly arise via application of
time-averaging or multi-scales analysis \cite{murdock,KM} as weakly nonlinear approximations to PDEs whose linearized spectrum of normal frequencies is highly resonant, as happens to a number of PDEs in harmonic traps or Anti-de Sitter spacetimes. We shall not review such derivations here, as it is quite far apart from our main focus, and will simply refer the reader to \cite{rev2,GHT,BBCE,murdock, KM}. We note that, viewed from this perspective, the dynamical variables $\al_n$ originate as the complex amplitudes of the linearized normal modes of the PDE, which acquire slow evolutions under the effect of weak nonlinearities. A system of the form (\ref{eq:flow_eq_linear_spectrum}) then accurately approximates this slow evolution.

The resonant equation (\ref{eq:flow_eq_linear_spectrum}) has a canonical Hamiltonian structure with a Hamiltonian of the form
	\beq
	\mathcal{H}={\frac12} \hspace{-2mm}\sum_{\begin{array}{c}\vspace{-6.5mm}\\ \ssty n,m,k,l=0\vspace{-2mm}\\ \ssty n+m=k+l\end{array}}^{\infty}\hspace{-3mm} C_{nmkl}\bar{\alpha}_n\bar{\alpha}_m{\alpha}_k\alpha_{l}\vspace{-3mm}
	\label{eq:H_cq}
	\eeq
and the symplectic form $i\sum_n d\bar\alpha_n\wedge d\alpha_n$. In addition to the Hamiltonian, there are two generic conserved quantities, irrespectively of the form of the interaction coefficients $C$, which we list together with the associated symmetry transformations:
	\begin{align}
	N = \sum_{n=0}^{\infty}|\alpha_n|^2, & \qquad \alpha_n(t) \to e^{i \phi} \alpha_n(t) \label{eq:N_consered_quantity},\\
	E = \sum_{n=0}^{\infty}n|\alpha_n|^2, & \qquad \alpha_n(t) \to e^{i n \theta} \alpha_n(t).  \label{eq:E_consered_quantity}
	\end{align}
	Additionally, equation (\ref{eq:flow_eq_linear_spectrum}) enjoys the scaling symmetry
	\beq
	\alpha_{n}(t) \to \epsilon \alpha_n(\epsilon^2 t).
	\eeq
For specific choices of the interaction coefficients, the set of symmetries and conserved quantities may, of course, become much bigger. This is the case for the class of systems treated in \cite{BBE}, for the \cuSz, and for the new systems we shall introduce below.

In full generality, equation (\ref{eq:flow_eq_linear_spectrum}) admits an infinite number of dynamically invariant manifolds. Namely, one can choose two mutually prime integers $p$ and $q$, $p<q$, and set to zero all $\al_n$ except for those with $n=p\hspace{-1mm}\mod q$. If this restriction is implemented in the initial conditions,  (\ref{eq:flow_eq_linear_spectrum}) guarantees that the modes set to zero will never get excited, which defines an invariant manifold of the evolution. We shall use such $(p\hspace{-1mm}\mod q)$-restrictions in some of our arguments. Again, for special systems within the large class given by  (\ref{eq:flow_eq_linear_spectrum}) , there can be many more invariant manifolds. Thus, for the systems at the focus of our current study, a crucial role is played by an infinite hierarchy of finite-dimensional invariant manifolds that we shall explicitly review below.

	\subsection{The cubic \Sz{} equation} \label{subsec:Szego}
	 The cubic \Sz{} equation was intensively studied in \cite{GG} and is usually given in the form
	 \beq
	 i\partial_t u = \Pi\left(|u|^2u\right),
	 \label{eq:Szego}
	 \eeq
	 where $\Pi$, the so-called \Sz{} projector, acts as a filter of negative Fourier modes
	\beq
	\Pi\left(\sum_{n\in\mathbb{Z}} \alpha_{n} e^{in\theta}\right) = \sum_{n=0}^{\infty} \alpha_{n} e^{in\theta}.
	\label{eq:Szego_projector}
	\eeq
	This equation is placed on the circle $\mathbb{S}^1$ and studied for $u\in L_{+}^2\left(\mathbb{S}^1\right)$; namely, the space of functions $L^{2}\left(\mathbb{S}^1\right)$ where negative Fourier modes are zero, $\alpha_n = 0$ $\forall n < 0$. Then, as $\Pi: L^2\left(\mathbb{S}^1\right)\to L_{+}^2\left(\mathbb{S}^1\right)$, (\ref{eq:Szego}) guarantees that for initial conditions $u_0\in L_{+}^2\left(\mathbb{S}^1\right)$
	\beq
		u(t, e^{i\theta}) = \sum_{n=0}^{\infty} \alpha_n(t) e^{i n\theta}, \qquad \text{with }\ \sum_{n=0}^{\infty} |\alpha_n|^2 < \infty.
		\label{eq:u_variable}
	\eeq
	The space $L^{2}\left(\mathbb{S}^1\right)$ is endowed with the inner product
	\beq
		\left(u|v\right) := \int_{\mathbb{S}^1}u\bar{v}\frac{d\theta}{2\pi}.
	\eeq
	The cubic \Sz{} equation admits a Hamiltonian structure with the Hamiltonian
	\beq
	\Ham_{Sz}=\frac{1}{4}\int_{\mathbb{S}^1}|u|^4  \frac{d\te}{2\pi} \qquad \text{with } u\in L_+^2(\mathbb{S}^1).
	\eeq

Substituting the Fourier expansion (\ref{eq:u_variable}) into (\ref{eq:Szego}), we rewrite the \cuSz{} in an equivalent form
	\beq
	i\frac{d\alpha_n}{dt}=\hspace{-5mm}\sum_{\begin{array}{c}\vspace{-6mm}\\ \ssty m,k,l=0\vspace{-2mm}\\ \ssty n+m=k+l\end{array}}^{\infty}\hspace{-5mm} \bar{\alpha}_m{\alpha}_k\alpha_{l}.
	\label{eq:Szego_modes}\vspace{-2mm}
	\eeq
This representation manifestly matches the general algebraic structure of the resonant equation (\ref{eq:flow_eq_linear_spectrum}), with the interaction coefficients given by the very simple
expression $C_{nmkl}^{\left(\text{Sz}\right)} = 1$.
We thus note that there are two different representations here to deal with: the position space representation (\ref{eq:Szego}) in terms of the \Sz{} projector, and the Fourier space representation (\ref{eq:Szego_modes}). The position space representation is more familiar from the literature on the \cuSz{}, and the \Sz{} projector language is effective, for example, for analyzing Lax integrability. The Fourier space representation makes more explicit contact with the physics of resonant systems. We shall rely on both representations in our derivations to highlight various aspects of our considerations.\vspace{5mm}

	Some relevant properties of the cubic \Sz{} equation are \cite{GG}:
	\begin{itemize}
		\item This system possesses two Lax pairs; see (\ref{LaxSz}) below.
		\item Sobolev norms\footnote{In terms of the Fourier modes, Sobolev norms can be expressed in the form $$\|u\|_{H^{s}}^2 = \sum_{n=0}^{\infty}\left(1+n\right)^{2s}|\alpha_n|^2.$$} of solutions of (\ref{eq:Szego}) subject to initial conditions $u_0\hspace{-0.12cm} \in\hspace{-0.12cm} H_{+}^{s}\left(\mathbb{S}^1\right)$  with $s\hspace{-0.1cm}>\hspace{-0.1cm}1$ (where $H_{+}^{s}\left(\mathbb{S}^1\right)\hspace{-0.12cm}:=\hspace{-0.12cm} H^s\left(\mathbb{S}^1\right)\hspace{-0.1cm}\cap\hspace{-0.1cm} L_{+}^2\left(\mathbb{S}^1\right)$), cannot grow faster than exponentially,
		\beq
			\|u(t)\|_{H^{s}} \leq C_{s}\|u_0\|_{H^{s}} e^{C_s \|u_0\|^2_{H^{s}}|t|} \qquad \text{with } C_{s}>0.
		\eeq
		\item There exist complex invariant manifolds $\mathcal{V}(d)$ consisting of functions of the form
		\beq
			u(z) = \frac{A(z)}{B(z)},
			\label{eq:V_manifold}
		\eeq
		where, $A(z)$ and $B(z)$ are polynomials in $z$ with no common factors, $B(0) = 1$, $B(z)$ has no zeros in the closed unit disk and 
		\begin{itemize}
			\item If $d = 2D$ is even, the degree of $A(z)$ is at most $D\hspace{-0.1cm}-\hspace{-0.1cm}1$ and the degree of $B(z)$ is exactly $D$.
			\item If $d = 2D+1$ is odd, the degree of $A(z)$ is exactly $D$ and the degree of $B(z)$ is at most $D$.
		\end{itemize} 
	These manifolds are linked to the Lax-integrability structure, as we shall explain below.
	\item Sobolev norms for initial conditions $u_0 \in \mathcal{V}(d)$ remain bounded for all times,
		\beq
			\forall s > \frac{1}{2} \qquad \underset{t\in\mathbb{R}}{\sup} \|u(t)\|_{H^{s}} < \infty.
			\label{eq:Szego_bond_infty}
		\eeq
		\item There are families of initial conditions $u_0^{\epsilon} \in \mathcal{V}(d)$ for a given $d$, with $\sup_{\epsilon} \|u_0^{\epsilon}\|_{H^s}<\infty$ for all $s$, such that
		\beq
			\forall s > \frac{1}{2} \qquad \underset{\epsilon}{\sup}\  \underset{t\in\mathbb{R}}{\sup} \|u^{\epsilon}(t)\|_{H^{s}} = \infty;
			\label{eq:Szego_epsilon_singular}
		\eeq
		namely, despite (\ref{eq:Szego_bond_infty}), we can fine-tune the initial conditions inside the same $\mathcal{V}(d)$ to get Sobolev norms with $s\hspace{-0.06cm}>\hspace{-0.06cm}1/2$ that grow as much as we please. (This is sometimes known as `weak weak turbulence' \cite{Colliander}.)
		In particular, the three-dimensional invariant manifold $\mathcal{V}(3)$ contains such solutions and will be of special interest in this paper.
		\item There exist initial conditions $u_0$ in $C^{\infty}\left(\mathbb{S}^1\right)\cap L_{+}^{2}\left(\mathbb{S}^1\right)$ such that for all $s> 1/2$ 
		\begin{align}
			\underset{t\to\infty}{\lim\text{sup}}\frac{\|u(t)\|_{H^s}}{t^{k}}= \infty& \qquad \forall k\geq 1,\\
			\underset{t\to\infty}{\lim\text{inf}}\|u(t)\|_{H^{s}} < \infty. & \ 
		\end{align}
		Namely, there is an infinite sequence of exchanges of energy back and forth between low and high modes, and the flow of energy to high modes provides for a super-polynomial growth of Sobolev norms with $s> 1/2$. We remark that currently there are no explicit examples of these initial data.
		\item The existence of initial conditions $u_0$, such that Sobolev norms with $s>1/2$ display an exponential growth is an open problem.
	\end{itemize}	
	 
	Now, to define the Lax pairs for (\ref{eq:Szego}), consider the following operators:
	\beq
	H_uh=\Pi(u\bar h),\qquad T_bh=\Pi(bh),\qquad Sh=e^{i\te}h.
	\label{eq:H_T_S_operators}
	\eeq
	In components, their action is
	\beq
	(H_u h)_n=\sum_{m=0}^\infty \al_{n+m}\bar h_m,\qquad (T_b h)_n=\sum_{m=0}^\infty b_{n-m} h_m, \qquad (Sh)_n=h_{n-1}.
	\label{eq:Hu_def}
	\eeq
	Note that $S$ is simply a shift changing the sequence $\{\al_0,\al_1,\ldots\}$ into $\{0,\al_0,\al_1,\ldots\}$, while the corresponding conjugate $S^\dagger$ does the opposite shift from $\{\al_0,\al_1,\ldots\}$ into $\{\al_1,\al_2,\ldots\}$. In particular, $S^\dagger S=1$.
	
	The two Lax pairs given in Theorem 3 in the third reference of \cite{GG} are 
	\beq
	\frac{dH_u}{dt}=[B_u,H_u],\qquad \frac{dK_u}{dt}=[C_u,K_u],
	\label{LaxSz}
	\eeq
	with
	\beq
		K_u = S^{\dagger}H_u=H_uS=H_{S^\dagger u}, \qquad B_u= \frac{i}2 H_u^2-i T_{|u|^2}, \qquad C_u=\frac{i}2 K_u^2-i T_{|u|^2},
		\label{eq:C_B_operators}
	\eeq
	whenever the equations of motion for $u$ are satisfied. The existence of invariant manifolds (\ref{eq:V_manifold}) can be understood in terms of these Lax operators. Actually, they correspond to the choices of $u$ for which the sum of the ranks of the operators $H_u$ and $K_u$ is $d$.
	

	\subsection{The $\alpha$-\Sz{} equation} \label{subsec:alpha_Szego}
	The $\alpha$-\Sz{} equation \cite{Xu} was constructed as a deformation of the \cuSz{} by a term proportional to the lowest mode $\alpha_0 = (u|1)$:
	\beq
	i\partial_t u = \Pi\left(|u|^2u\right) + \alpha\left(u|1\right),
	\label{eq:alpha_Szego_v1}
	\eeq
	 where $\alpha\in\mathbb{R}$ and $u,\ \Pi$ and the operators $K_u$ and $C_u$ (that will appear later in this section), have the same definition as for the cubic \Sz{} equation. For any $\alpha~\neq~0$, the continuous dependence on this parameter can be absorbed by the rescaling $\tilde{u}(t) = \sqrt{|\alpha|}u(|\alpha|t)$, leaving the $\alpha$-\Sz{} equation as
	\beq
	i\partial_t \tilde{u} = \Pi\left(|\tilde{u}|^2\tilde{u}\right) + \text{sgn}(\alpha)\left(\tilde{u}|1\right).
	\label{eq:alpha_Szego_v2}
	\eeq 

Note that this model cannot be literally expressed in terms of the resonant equation (\ref{eq:flow_eq_linear_spectrum}), but the necessary deviation from the algebraic structure of (\ref{eq:flow_eq_linear_spectrum}) is small and only appears in the equation for the lowest mode:
		\beq
		i\frac{d\alpha_0}{dt}\mp \text{sgn}(\alpha)\alpha_0 =\hspace{-5mm}\sum_{\begin{array}{c}\vspace{-6mm}\\ \ssty m,k,l=0\vspace{-2mm}\\ \ssty m=k+l\end{array}}^{\infty}\hspace{-5mm} \bar{\alpha}_m{\alpha}_k\alpha_{l},
		  \qquad \text{and} \qquad  i\frac{d\alpha_n}{dt}=\hspace{-5mm}\sum_{\begin{array}{c}\vspace{-6mm}\\ \ssty m,k,l=0\vspace{-2mm}\\ \ssty n+m=k+l\end{array}}^{\infty}\hspace{-5mm} \bar{\alpha}_m{\alpha}_k\alpha_{l} \qquad \text{for }n \geq 1. 
		  \label{eq:alpha_Szego_modes}
		\eeq 
		The properties of this system are slightly different from the case of the cubic \Sz{} model \cite{Xu}:
		\begin{itemize}
			\item The $\alpha$-\Sz{} model possesses one Lax pair
			\beq
				\frac{dK_{u}}{dt} = \left[C_u, K_u\right].
				\label{eq:Lax_pair_truncate_Szego}
			\eeq
			\item Sobolev norms of solutions of (\ref{eq:alpha_Szego_v2}) subject to initial conditions $u_0 \in H_{+}^{s}\left(\mathbb{S}^1\right)$ with $s>1$, cannot grow faster than exponentially,
			\beq
			\|u(t)\|_{H^{s}} \leq \|u_0\|_{H^{s}} e^{C |t|} \qquad \text{with } C>0.
			\eeq
			\item There exist complex invariant manifolds $\mathcal{L}(D)$ defined by the choices of $u$ such that the rank of $K_u$ is $D$. They consist of rational functions of the form
			\beq
				u(z) = \frac{A(z)}{B(z)},
				\label{eq:L_odd}
			\eeq	
			where $A(z)$ and $B(z)$ are polynomials in $z$ of degree at most $D$, with no common factors, $\text{deg}(A) = D$ or $\text{deg}(B) = D$ and $B(z)$ has no zeros in the closed unit disk.

			\item For $\alpha < 0$ and initial conditions $u_0 \in \mathcal{L}(D)$, Sobolev norms are bounded
			\beq
			\forall s \geq 0 \qquad  \|u(t)\|_{H^{s}} < C.
			\eeq
			\item For $\alpha > 0$ and some $u_0 \in \mathcal{L}(1)$, Sobolev norms with $s\hspace{-0.08cm} >\hspace{-0.08cm} \frac{1}{2}$ grow exponentially for large enough~time,
			\beq
			\forall s > \frac{1}{2}, \qquad \|u(t)\|_{H^{s}} \underset{t\to\infty}{\simeq} e^{c(2s-1)|t|}.
			\eeq
			Hence, the $\alpha$-\Sz{} equation has solutions with unbounded Sobolev norms. 
			\item The $\alpha$-\Sz{} equation contains the cubic \Sz{} equation. Even for $\alpha\neq 0$, by restricting the initial conditions to odd modes (setting even modes to 0), (\ref{eq:alpha_Szego_modes}) is reduced to (\ref{eq:Szego_modes}). Hence, the $\alpha$-\Sz{} equation has subsectors of initial conditions with the properties displayed in subsection~\ref{subsec:Szego}.
		\end{itemize}	
		
	
	\section{The truncated \Sz{} equation}\label{sec:TR_Szego}
	
With the above preliminaries, we proceed with the key point of our presentation, which is the introduction of the {\em truncated \Sz{} equation}. Starting from the \cuSz, one simply sets to zero a specific large set of the interaction coefficients (a majority of them, in fact), while leaving the remaining ones intact, according to the pattern
	\beq
	C_{nmkl}^{\left(tr\right)} = \begin{cases}
		1 & \text{if } n m k l = 0,\\
		0 & \text{if } n m k l \neq 0.
	\end{cases}
	\label{eq:C_BE}
	\eeq
The condition that the product $nmkl$ must vanish evidently implies that at least one of the mode numbers $n$, $m$, $k$ or $l$ must vanish in order for the corresponding coupling coefficient $C$ to be nonzero.
	We have labelled the interaction coefficients of this truncated \Sz{} system  by $C^{\left(tr\right)}$ for future reference. By truncation, we simply mean eliminating interactions between modes (it should not be confused with restricting the dynamics of a given system to one of its invariant manifolds). Given the expression for $C^{\left(tr\right)}$, the equations of motion (\ref{eq:flow_eq_linear_spectrum}) take the form
	\beq
	i\frac{d\alpha_0}{dt} = \sum_{m=0}^{\infty} \sum_{k=0}^{m}\bar{\alpha}_m \alpha_k \alpha_{m-k}, \qquad i\frac{d\alpha_{n}}{dt} = \bar{\alpha}_0 \sum_{k=0}^{n}\alpha_k \alpha_{n-k} + 2\alpha_0 \sum_{m=1}^{\infty} \bar{\alpha}_m \alpha_{n+m} \quad \text{for } n \geq 1.
	\label{eq:resonant_eq_BE}
	\eeq
	 One can rewrite (\ref{eq:resonant_eq_BE}) in position space, i.e., in terms of $u$ given by (\ref{eq:u_variable}):
	\beq
	i\partial_t u = \Pi(|u|^2u)-S\Pi(|S^\dagger u|^2 S^\dagger u),
	\label{eq:truncated_Szego_eq}
	\eeq
	where $\Pi$ is the \Sz{} projector (\ref{eq:Szego_projector}) and $S$ the shift operator defined in (\ref{eq:H_T_S_operators}). The corresponding Hamiltonian is
	\beq
	\Ham_{tr}=\frac{1}{4}\int_{\mathbb{S}^1}\left(|u|^4-|S^\dagger u|^4\right) \frac{d\te}{2\pi} = \frac{1}{4}\sum_{n=0}^{\infty}\sum_{m=0}^{\infty}\sum_{k=0}^{n+m}C_{nmk(n+m-k)}^{\left(tr\right)}\bar\alpha_n\bar\alpha_m\alpha_k\alpha_{n+m-k}.
	\label{eq:Hamiltonian_truncated_Szego}
	\eeq
This Hamiltonian can be understood as the cubic \Sz{} system minus a ``shifted" cubic \Sz{} system, with $\{\alpha_0,\alpha_1,...\}$ replaced with 
$\{\alpha_1,\alpha_2,...\}$. The same pattern can be noticed in (\ref{eq:truncated_Szego_eq}). This structure of the model will be important for deriving properties of the truncated \Sz{} equation from the results previously known for the \cuSz.

	We now list the main properties of the truncated \Sz{} equation, which are the central technical results of our paper, and which will be proved in the remainder of this section:
	\begin{itemize}
		\item The truncated \Sz{} equation possesses one Lax pair, defined through the operators
(\ref{eq:H_T_S_operators}-\ref{eq:C_B_operators}):
		\beq
		\frac{dK_{u}}{dt} = \left[C_u-B_{S^\dagger u}, K_u\right].
\label{Laxtr}
		\eeq
		(See appendix A for an explicit action of these operators in  the mode representation.) Note that the presence of two Lax pairs in the original \cuSz{} is essentially used in the construction of its general solution in the third reference of \cite{GG}. Such derivations do not immediately generalize to the $\al$-\Sz{} equation or our system.
		\item Sobolev norms of solutions of (\ref{eq:truncated_Szego_eq}) subject to initial conditions $u_0 \in H_{+}^{s}\left(\mathbb{S}^1\right)$ with $s>1$, cannot grow faster than exponentially, which excludes a finite-time blow-up:
		\beq
		\|u(t)\|_{H^{s}} \leq \|u_0\|_{H^{s}} e^{C |t|} \qquad \text{with } C>0.
		\label{eq:truncated_Szego_global_bound}
		\eeq
		\item There exist complex invariant manifolds $\mathcal{L}(D)$ defined in (\ref{eq:L_odd}).
		\item For some $u_0\hspace{-0.1cm}\in\mathcal{L}(1)$, Sobolev norms with $s\hspace{-0.08cm} >\hspace{-0.08cm} \frac{1}{2}$ grow exponentially at late times:
		\beq
		\forall s > \frac{1}{2}, \qquad \|u(t)\|_{H^{s}} \underset{t\to\infty}{\simeq} e^{(2s-1)c|t|}.
		\label{eq:truncated_Szego_exponential_growth_Sobolev}
		\eeq
	\end{itemize}
	Before proceeding with our proofs, we add some extra comments:
	\begin{itemize}
		\item Integrability is a very fragile property, and a priori, one expects any modifications in the
mode coupling pattern to upset it. The specific modification of the mode couplings used to define the truncated \Sz{} equation is very special in this regard, as integrability is preserved (although we lose one of the two Lax pairs of the \cuSz{} equation). Note that the first Lax operator $K_u$ is common to the original and the truncated \Sz{} equations. As a consequence, one can construct a Lax pair for an arbitrary linear combination of $C^{\left(tr\right)}$ and $C^{\left(\text{Sz}\right)}$. This is the key idea behind our $\beta$-\Sz{} system to be introduced in section \ref{sec:beta_Szego}.
		\item The exponential growth of Sobolev norms (\ref{eq:truncated_Szego_exponential_growth_Sobolev}) is very surprising given the apparently impoverished structure of (\ref{eq:C_BE}). Interactions only through high modes have been completely eliminated, so that all the interactions must involve the lowest mode. Paradoxically, this enhances
the turbulent phenomena in $\mathcal{L}(1)$.
		\item In view of the exponential growth of Sobolev norms (\ref{eq:truncated_Szego_exponential_growth_Sobolev}) the bound (\ref{eq:truncated_Szego_global_bound}) is optimal.
		\item In this work, we will focus our attention on the properties of solutions in $\mathcal{L}(1)$. Dynamics in general $\mathcal{L}(D)$ is an open problem.
		\item The properties displayed here for the truncated \Sz{} equation and the ones for the\\ $\alpha$-\Sz{} equation showed in section \ref{subsec:alpha_Szego} are similar; however, we remark that the mode coupling structures of systems (\ref{eq:alpha_Szego_modes}) and (\ref{eq:resonant_eq_BE}) are very different. We shall return to further comparisons between these systems in our concluding section.
	\end{itemize}  

	 \subsection{Lax pair, bounded Hamiltonian and $\mathcal{L}(D)$ invariant manifolds}
	 The local and global well-posedness of the truncated \Sz{} equation for initial conditions $u_0 \in H_{+}^{s}\left(\mathbb{S}^1\right)$ with $s> 1/2$, as well as the exponential upper bound (\ref{eq:truncated_Szego_global_bound}), come from the results obtained in \cite{GG} for the \Sz{} equation and in \cite{Xu} for the $\alpha$-\Sz{} equation, essentially because the right-hand sides of (\ref{eq:resonant_eq_BE}) consist of subsets of terms that would have appeared in the case of the \cuSz. Specifically, after integrating (\ref{eq:truncated_Szego_eq}) in time
	 \beq
	 	u(t) = u_0 - i \int_{0}^{t}\left(\Pi(|u(s)|^2u(s))-S\Pi(|S^\dagger u(s)|^2 S^\dagger u(s))\right)ds,
	 \eeq
	 we have to use the estimates (where $\|u\|_{W} := \sum |\alpha_n|$ is the Wiener norm)
	 \begin{align*}
	 	& \|\Pi\left(|u|^2u\right)\hspace{-0.05cm}-\hspace{-0.05cm}S\Pi\left(|S^\dagger u|^2 S^\dagger u\right)\hspace{-0.15cm}\|_{H^{s}}  \hspace{-0.05cm} \leq \hspace{-0.05cm} \|\Pi\left(|u|^2u\right)\hspace{-0.15cm}\|_{H^{s}}\hspace{-0.05cm}+\hspace{-0.05cm}\|S\Pi\left(|S^\dagger u|^2 S^\dagger u\right)\hspace{-0.15cm}\|_{H^{s}}  \hspace{-0.05cm},\\
		& \|\Pi\left(|u|^2u\right)\hspace{-0.15cm}\|_{H^{s}} \leq c \|u\|_{L^\infty}^2 \|u\|_{H^s} \leq c \|u\|_{W}^2\|u\|_{H^s},\\
		& \|SS^\dagger u\|_{L^{\infty}} = \| u - (u|1)\|_{L^{\infty}} \leq \|u\|_{L^{\infty}}+\|(u|1)\|_{L^{\infty}} \leq 2\|u\|_{L^{\infty}},\\
		& \|S\Pi\left(|S^\dagger u|^2S^\dagger u\right)\hspace{-0.15cm}\|_{H^{s}} \hspace{-0.05cm} \leq \hspace{-0.05cm} \|\Pi\left(|SS^\dagger u|^2 SS^\dagger u\right)\hspace{-0.15cm}\|_{H^{s}} \hspace{-0.05cm} \leq \hspace{-0.05cm} \hat{c} \| u\|_{L^\infty}^2 \|u\|_{H^s} \hspace{-0.05cm} \leq \hspace{-0.02cm} \hat{c} \|u\|_{W}^2 \|u\|_{H^s},
		\end{align*}
		the Brezis-Gallou\"et type estimate\footnote{A proof of this estimate can be found in the appendix of the first reference of \cite{GG}.}
		$$ \|u\|_{L^{\infty}} \leq C \|u_0\|_{H^{\frac{1}{2}}} \left(\log \left(2 + \frac{\|u\|_{H^{s}}}{\|u_0\|_{H^{\frac{1}{2}}}}\right)\right)^{1/2},
		$$
		the upper bound for the Wiener norm for $s>1$ \cite{Xu},
		\beq \underset{t\in\mathbb{R}}{\text{sup}} \|u\|_{W} \leq \tilde{c}_s \|u_{0}\|_{H^s},		
		\eeq
	 and the Gronwall lemma. Note that the bound on the Wiener norm is proved in \cite{Xu} only relying on the fact that $K_u$ is a Lax operator. Since that holds true for our current model, as we shall immediately demonstrate, the proof of \cite{Xu} directly translates to our case.
	 
	 To establish the Lax pair (\ref{Laxtr}), we recall the ``\Sz{}-minus-shifted-\Sz{}" structure of the truncated \Sz{} system. Then, one can effectively reuse the Lax pairs (\ref{LaxSz}). 
	 \begin{proposition}
		Let $u\in C\left(\mathbb{R},H^{s}\left(\mathbb{S}^1\right)\right)$ for $s> 1/2$, if $u$ solves the truncated \Sz{} equation (\ref{eq:truncated_Szego_eq}), then $\left(K_u, D_u\right)$ with $D_u = C_u-B_{S^\dagger u}$ satisfy
		\beq
			\frac{dK_{u}}{dt} = \left[D_u, K_u\right],
\label{DK}
		\eeq
i.e., they provide a Lax pair.
	 \end{proposition}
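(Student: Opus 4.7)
The cornerstone is the identity $K_u = H_{S^\dagger u}$ together with linearity of $u\mapsto H_u$ (and hence $u\mapsto K_u$). Because these maps are linear, differentiating in time commutes with the $u$-dependence:
\beq
\frac{dK_u}{dt} = K_{\partial_t u}, \qquad \frac{dH_u}{dt} = H_{\partial_t u}.
\label{eq:linearity_planned}
\eeq
Using the truncated equation (\ref{eq:truncated_Szego_eq}), I would substitute
\beq
\partial_t u = -i\,\Pi(|u|^2 u) + i\,S\,\Pi(|S^\dagger u|^2 S^\dagger u)
\eeq
into (\ref{eq:linearity_planned}) to obtain
\beq
\frac{dK_u}{dt} = -i\,K_{\Pi(|u|^2 u)} + i\,K_{S\,\Pi(|S^\dagger u|^2 S^\dagger u)}.
\eeq

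The next step is to reduce each piece to commutators of the operators already appearing in (\ref{LaxSz}). For the first piece, the known cubic \Sz{} Lax identity $dK_u/dt=[C_u,K_u]$ on the cubic \Sz{} flow translates, via (\ref{eq:linearity_planned}), into the \emph{purely algebraic} relation $K_{\Pi(|u|^2 u)} = i[C_u,K_u]$, valid for any $u$ (no equation assumed), so its contribution here is exactly $[C_u,K_u]$. For the second piece, I set $v:=S^\dagger u$ and exploit $S^\dagger S=\mathbf 1$ together with $K_u=H_{S^\dagger u}$, which gives
\beq
K_{S\,\Pi(|v|^2 v)} = H_{S^\dagger S\,\Pi(|v|^2 v)} = H_{\Pi(|v|^2 v)}.
\eeq
Then the analogous algebraic content of the first cubic \Sz{} Lax pair, $H_{\Pi(|v|^2 v)} = i\,[B_v,H_v]$, combined with $H_v=H_{S^\dagger u}=K_u$, produces $i[B_{S^\dagger u},K_u]$. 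Assembling the two contributions yields
\beq
\frac{dK_u}{dt} = [C_u,K_u] - [B_{S^\dagger u},K_u] = [D_u,K_u],
\eeq
as required.

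The main potential pitfall is the legitimacy of the algebraic step $H_{S^\dagger S\,\Pi(|v|^2 v)} = H_{\Pi(|v|^2 v)}$ and, more broadly, of isolating the cubic \Sz{} Lax identities as $u$-pointwise algebraic relations independent of whether $u$ actually solves the cubic \Sz{} equation. This is where I would be careful: the linearity of $K_u$ (and $H_u$) in $u$ means the \emph{equation-of-motion} factor in the derivation of (\ref{LaxSz}) decouples from the operator identity, so substituting any other time evolution of $u$ is permitted. A small subsidiary check is that $\Pi$ can be moved through $H$, i.e.\ $H_f=H_{\Pi(f)}$, which follows because for any $h\in L^2_+$ the product $(I-\Pi)(f)\cdot\bar h$ contains only strictly negative Fourier modes and is therefore annihilated by $\Pi$. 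With these two observations in hand, the proof is essentially a reshuffling of the two classical cubic \Sz{} Lax pairs, reflecting the ``\Sz-minus-shifted-\Sz'' structure of $\Ham_{tr}$ noted after (\ref{eq:Hamiltonian_truncated_Szego}).
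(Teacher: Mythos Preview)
Your proof is correct and follows essentially the same route as the paper: you exploit the linearity of $u\mapsto K_u$ to split $dK_u/dt$ into two pieces, identify the first with $[C_u,K_u]$ via the second cubic \Sz{} Lax pair, and reduce the second via $K_{Sw}=H_{S^\dagger Sw}=H_w$ to $[B_{S^\dagger u},H_{S^\dagger u}]=[B_{S^\dagger u},K_u]$ using the first cubic \Sz{} Lax pair applied to $v=S^\dagger u$. Your added remarks---that the cubic \Sz{} Lax relations can be read as pointwise algebraic identities $K_{\Pi(|u|^2u)}=i[C_u,K_u]$ and $H_{\Pi(|v|^2v)}=i[B_v,H_v]$, and that $H_f=H_{\Pi f}$---are correct clarifications that the paper leaves implicit.
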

	 \begin{proof}
	 	For this proof, we will use the Lax pairs (\ref{LaxSz}) for the cubic \Sz{} equation. We consider $dK_u/dt$ evaluated with the equation of motion (\ref{eq:truncated_Szego_eq}). Since $K_u$ is linear in $u$, we have
	 	\beq
	 	\frac{dK_u}{dt}=-i K_{\Pi(|u|^2u)}+iK_{S\Pi(|S^\dagger u|^2 S^\dagger u)}.
	 	\eeq
	 	The first term is simply what one would have gotten for the \cuSz{} itself, and hence it equals $[C_u,K_u]$ by (\ref{LaxSz}). The second term can be written as follows, taking into account (\ref{LaxSz}),
	 	\beq
	 	-iK_{S\Pi(|S^\dagger u|^2 S^\dagger u)}=-iH_{S^\dagger S\Pi(|S^\dagger u|^2 S^\dagger u)}= -iH_{\Pi(|S^\dagger u|^2 S^\dagger u)}=[B_{S^\dagger u}, H_{S^\dagger u}]=[B_{S^\dagger u}, K_{u}].
	 	\eeq
	 	Hence,
	 	\beq
	 	\frac{dK_u}{dt}=[C_u-B_{S^\dagger u},K_u],
	 	\eeq
	 	and a Lax pair for the truncated \Sz{} system is given by $K_u$ and
	 	\beq
		 	 D_u=C_u-B_{S^\dagger u}=iT_{|S^\dagger u|^2}-iT_{|u|^2}=-iT_{|u|^2-|S^\dagger u|^2}.
	 	 \eeq
(We give an alternative verification of the Lax pair in appendix A using the language of the mode space. This derivation is lengthier but more straightforward and self-contained.)
	 \end{proof}
	 
	 An important fact that we must clarify is whether the Hamiltonian (\ref{eq:Hamiltonian_truncated_Szego}) is bounded from below. While the Hamiltonian for the \cuSz{} is positive, removing interaction coefficients to obtain $C^{\left(tr\right)}$ could undermine the existence of a lower bound. This does not in
	 fact occur:
	 \begin{proposition}
	 	Given initial conditions $\{\alpha_n(0)\}$ such that the conserved quantities N and E defined in (\ref{eq:N_consered_quantity}-\ref{eq:E_consered_quantity}) are finite, the Hamiltonian (\ref{eq:Hamiltonian_truncated_Szego}) is bounded from below by
	 	\beq
		 	\Ham_{tr} \geq - N E.
		 	\label{eq:H_bounde_truncated_Szego}
	 	\eeq
	 \end{proposition}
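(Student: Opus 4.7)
The strategy is to expand $4\Ham_{tr}$ in Fourier modes, isolate the single sign-indefinite contribution, and bound it using Cauchy--Schwarz and AM--GM.

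Using the pointwise identity $|S^\dagger u| = |u - \alpha_0|$ (which follows from $u - \alpha_0 = e^{i\theta} S^\dagger u$), we rewrite $4\Ham_{tr} = \int_{\mathbb{S}^1} (|u|^4 - |u - \alpha_0|^4) \frac{d\theta}{2\pi}$. Expanding $|u|^4 - |u - \alpha_0|^4$ in powers of $\alpha_0$ and $\tilde u := u - \alpha_0$ and integrating term by term, using that $\tilde u \in L_+^2$ has vanishing zero mode (so $\int \tilde u\, d\theta = 0$ and $\int \tilde u^2\, d\theta = 0$), yields the mode-space identity
\begin{equation*}
4\Ham_{tr} = -3|\alpha_0|^4 + 4|\alpha_0|^2 N + 4\Re(\alpha_0 \bar J), \qquad J := \sum_{n,k \geq 1} \alpha_n \alpha_k \bar\alpha_{n+k}.
\end{equation*}
Since $|\alpha_0|^2 \leq N$, only the last summand can contribute negatively.

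Cauchy--Schwarz applied to $J$ gives $|J|^2 \leq \bigl(\sum_{n,k \geq 1}|\alpha_n|^2 |\alpha_k|^2\bigr)\bigl(\sum_{n,k \geq 1}|\alpha_{n+k}|^2\bigr) \leq (N-|\alpha_0|^2)^2 E$, where in the second factor the reindexing $m = n+k$ turns the sum into $\sum_{m \geq 2}(m-1)|\alpha_m|^2 \leq E$. AM--GM in the form $2|\alpha_0|\sqrt{E} \leq |\alpha_0|^2 + E$ then yields
\begin{equation*}
4\Re(\alpha_0 \bar J) \geq -4|\alpha_0|\,|J| \geq -2(N - |\alpha_0|^2)(|\alpha_0|^2 + E).
\end{equation*}
Substituting this into the identity for $4\Ham_{tr}$ and adding $4NE$ to both sides, a direct expansion collapses to
\begin{equation*}
4(\Ham_{tr} + NE) \geq |\alpha_0|^2(2N - |\alpha_0|^2) + 2E(N + |\alpha_0|^2) \geq 0,
\end{equation*}
where non-negativity of each summand uses $|\alpha_0|^2 \leq N$. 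This establishes (\ref{eq:H_bounde_truncated_Szego}).

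The only moderately delicate step is the first: keeping track of the cross terms when expanding $|u|^4 - |u - \alpha_0|^4$ requires care, but this is routine algebra with no conceptual obstacle. Everything downstream reduces to soft Cauchy--Schwarz and AM--GM estimates.
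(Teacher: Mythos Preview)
Your proof is correct. Both you and the paper start from the same Fourier decomposition of $4\Ham_{tr}$: your identity $4\Ham_{tr}=-3|\alpha_0|^4+4|\alpha_0|^2N+4\Re(\alpha_0\bar J)$ is just a repackaging of the paper's expression (\ref{Htr}). The only difference is in how the sign-indefinite cross term is estimated. The paper slices $J$ by the outer index $n$, applies the polarization identity $2\Re(\overline{\alpha_0\alpha_n}\,b_n)=|\alpha_0\alpha_n+b_n|^2-|\alpha_0\alpha_n|^2-|b_n|^2$ with $b_n=\sum_{k=1}^{n-1}\alpha_k\alpha_{n-k}$, drops the first square, and bounds $|b_n|^2$ by a per-slice Cauchy--Schwarz. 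You instead apply a single global Cauchy--Schwarz to the double sum $J$ and then AM--GM to split $|\alpha_0|\sqrt{E}$. Your route is a bit more streamlined; the paper's retains an explicit discarded non-negative square, which in principle gives a slightly sharper intermediate inequality, but both land on the same final bound $\Ham_{tr}\geq -NE$.
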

	\begin{proof}
		The Hamiltonian (\ref{eq:Hamiltonian_truncated_Szego}) is rewritten in the following form
		\beq
		\Ham_{tr}=\frac{|\al_0|^4}{4}+|\al_0|^2\sum_{n=1}^\infty |\al_n|^2+ \frac{1}{2}\sum_{n=2}^\infty \sum_{k=1}^{n-1} \left(\alb_0 \alb_{n} \al_k \al_{n-k}+ \alb_k \alb_{n-k}\al_0 \al_{n}\right).
		\label{Htr}
		\eeq
		Note that
		$$
		\sum_{n=2}^\infty \sum_{k=1}^{n-1} \left(\alb_0 \alb_{n} \al_k \al_{n-k}+ \alb_k \alb_{n-k}\al_0 \al_{n}\right)=\sum_{n=2}^\infty\left[\Big|\al_0 \al_{n}+\sum_{k=1}^{n-1} \al_k \al_{n-k}\Big|^2-|\al_0\al_n|^2-\Big|\sum_{k=1}^{n-1} \al_k \al_{n-k}\Big|^2\right],
		$$
		and also that
		$$
		\sum_{n=2}^\infty |\al_0\al_n|^2 \le |\al_0|^2\sum_{n=1}^\infty |\al_n|^2.
		$$
		At the same time, by the Cauchy-Schwartz inequality,
		$$
		\Big|\sum_{k=1}^{n-1} \al_k \al_{n-k}\Big|^2\le \sum_{k=1}^{n-1} |\al_k|^2 |\al_{n-k}|^2 \sum_{l=1}^{n-1} 1=(n-1) \sum_{k=1}^{n-1} |\al_k|^2 |\al_{n-k}|^2= \sum_{k=1}^{n-1} (2k-1)|\al_k|^2 |\al_{n-k}|^2.
		$$
		Hence,
		\begin{align*}
		&\sum_{n=2}^\infty \Big|\sum_{k=1}^{n-1} \al_k \al_{n-k}\Big|^2\le \sum_{k=1}^{\infty} (2k-1)|\al_k|^2  \sum_{l=1}^{\infty}|\al_{l}|^2\\
&\hspace{3.5cm}=(2E-N+|\al_0|^2)(N-|\al_0|^2)=2NE-2E|\al_0|^2-(N-|\al_0|^2)^2.
		\end{align*}
		Combining everything together, we obtain the bound (\ref{eq:H_bounde_truncated_Szego}).
	\end{proof}

	As the existence of invariant manifolds $\mathcal{L}\left(D\right)$ is linked to the Lax operator $K_u$, they are predictably respected by the truncated \Sz{} evolution:
\begin{proposition}
	For every positive integer $D$ there exists a complex manifold $\mathcal{L}(D)$, given in (\ref{eq:L_odd}), that is invariant under the evolution of the truncated \Sz{} equation (\ref{eq:resonant_eq_BE}).
\end{proposition}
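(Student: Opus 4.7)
The plan is to exploit the characterization of $\mathcal{L}(D)$ via the rank of the Lax operator $K_u$ together with the fact that the evolution derived in the previous proposition is isospectral. Recall from subsection~\ref{subsec:alpha_Szego} that $\mathcal{L}(D)$ coincides with the set of $u\in L_+^2(\mathbb{S}^1)$ for which $\mathrm{rank}(K_u)=D$; this is the property we propagate.

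First I would observe that the second Lax operator $D_u=C_u-B_{S^\dagger u}=-iT_{|u|^2-|S^\dagger u|^2}$ is \emph{anti-self-adjoint}: the multiplier $|u|^2-|S^\dagger u|^2$ is real-valued, so the Toeplitz-type operator $T_{|u|^2-|S^\dagger u|^2}$ is self-adjoint on $L_+^2(\mathbb{S}^1)$, and the factor $-i$ makes $D_u$ skew-adjoint. Hence, for a solution $u\in C(\mathbb{R},H_+^s(\mathbb{S}^1))$ with $s>1/2$ (whose global existence is granted by the previous discussion), the non-autonomous linear ODE
\beq
\frac{dU}{dt}=D_{u(t)}U,\qquad U(0)=I
\eeq
generates a one-parameter family of unitary operators $U(t)$ on $L_+^2(\mathbb{S}^1)$.

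Next, I would differentiate $U(t)^\dagger K_{u(t)} U(t)$ and use the Lax equation \eqref{DK} together with $dU/dt=D_uU$ and $dU^\dagger/dt=-U^\dagger D_u$ to conclude
\beq
\frac{d}{dt}\bigl(U^\dagger K_u U\bigr)=U^\dagger\bigl([D_u,K_u]-[D_u,K_u]\bigr)U=0,
\eeq
so $K_{u(t)}=U(t)K_{u(0)}U(t)^\dagger$. Unitary conjugation preserves the spectrum and, in particular, the rank of a compact operator: $\mathrm{rank}(K_{u(t)})=\mathrm{rank}(K_{u(0)})=D$. Therefore $u(t)\in\mathcal{L}(D)$ for all $t\in\mathbb{R}$, which is exactly the invariance claim.

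The mildly delicate step will be making the conjugation argument rigorous, since $D_u$ is an unbounded operator on $L_+^2$; however, because $D_u$ is skew-adjoint with a time-continuous dependence on $u(t)\in H^s$, standard existence results for unitary propagators (e.g.\ Kato's theorem for skew-adjoint generators with sufficiently regular time dependence) supply $U(t)$ without difficulty. The remaining pieces---the explicit rank characterization of $\mathcal{L}(D)$ and its equivalence with the rational-function description \eqref{eq:L_odd}---are already established in \cite{Xu} and depend only on $K_u$, which is shared with the $\alpha$-\Sz{} case, so no separate verification of the rational structure is required.
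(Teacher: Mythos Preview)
Your argument is correct, and it is precisely the ``more complete (but less elementary)'' proof that the paper alludes to in its final paragraph but does not write out: you use the Lax equation \eqref{DK} to show that $K_{u(t)}$ is unitarily conjugate to $K_{u(0)}$, hence has the same rank, and then invoke the rank characterization of $\mathcal{L}(D)$ from \cite{Xu}. The paper itself instead gives a direct and elementary verification: it parametrizes a generic point of $\mathcal{L}(D)$ by its simple poles, $\alpha_{n\ge1}=\sum_{k=1}^{D}c_kp_k^{\,n}$, substitutes into the mode equations \eqref{eq:resonant_eq_BE}, and checks that the result closes into $2D+1$ ODEs for $(b,c_k,p_k)$. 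That computation is concrete and avoids any operator theory, but is only valid as long as the poles remain simple and distinct; your isospectral argument works globally, including through pole collisions, which is exactly why the paper points to it as the complete proof. One small correction: you describe $D_u=-iT_{|u|^2-|S^\dagger u|^2}$ as unbounded and invoke Kato's theorem, but in fact for $u\in H^s_+$ with $s>1/2$ the symbol $|u|^2-|S^\dagger u|^2$ is bounded, so $D_u$ is a \emph{bounded} skew-adjoint operator and the unitary propagator $U(t)$ exists by elementary ODE theory in $L^2_+$; this only simplifies your argument.
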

\begin{proof}
	Since our $K_u$ is exactly the same as for the cubic \Sz{} and $\alpha$-\Sz{} equations, we could simply verbatim recapitulate the analysis of \cite{Xu}.
Instead of reproducing this proof, we shall sketch here an elementary proof that remains valid for as long as no two simple poles of $u$ collide. We are going to show that the truncated \Sz{} equation reduces to $2D+1$ ODEs for $2D+1$ variables, making the restriction to such manifolds consistent with the evolution, when the initial conditions are restricted to $\mathcal{L}(D)$ and assuming that $u(t=0,z)$ only has simple poles. The Fourier coefficients of $u(z)$ are then of the form
	\beq
		\alpha_{n \geq 1}(t) = \sum_{k=1}^{D}c_k(t) p_k(t)^n, \qquad \hspace{-0.5cm} \alpha_0(t) = b(t) +  \sum_{k=1}^{D}c_k(t), \quad \hspace{-0.3cm} |p_k|<1 , \quad \hspace{-0.3cm} p_i\neq p_j.
		\label{eq:L_D_single_poles}
	\eeq

	We start our analysis with the equations for $\al_{n \geq 1}$, returning to the equation for $\al_0$ at the end. The left-hand side of (\ref{eq:resonant_eq_BE}) 
		reduces to $D$ terms of the form $i \left(\dot{c}_i + n c_i \frac{\dot{p}_i}{p_i}\right) p_i^n$. Now we are going to show that the right-hand side is decomposed into $D$ terms with the same structure; namely, $p_i^n$ times a linear function of $n$. After substituting (\ref{eq:L_D_single_poles}) on the right-hand side of (\ref{eq:resonant_eq_BE}), we can show that the second sum is decomposed into terms of the form $A_i(b,p,c)p_i^n$ where $A_i(b,p,c)$ do not depend on $n$. The first sum has terms of the form
		\beq
			B_{ij}(b,c,p)\sum_{k=0}^{n}p_i^{k}p_j^{n-k}.
		\eeq
		When $i = j$ it becomes $B_{jj}(b,p,c) (n+1) p_j^n$ and when $i\neq j$ (remember that $p_i\neq p_j$ and $|p_k| < 1$)
		\beq
		B_{ij}(b,c,p)\sum_{k=0}^{n}p_i^{k}p_j^{n-k} = B_{ij}(b,p,c) \frac{p_i^{n+1}-p_j^{n+1}}{p_i-p_j}.
		\eeq
		As a consequence, gathering all the terms with $p_i^{n}$ on the right-hand side, we get $D$ terms whose $n$-dependence is of the form $p_i^{n}$ times a linear function of $n$. Matching the left-hand side and the right-hand side, we obtain $2D$ equations for $2D+1$ variables. The remaining equation, the one for $b(t)$, comes from expressing $\dot b$ using the equation of motion for ${\alpha}_0$ and the $2D$ equations for $\dot p_i$ derived above.
		
A more complete (but less elementary) proof that remains valid even if poles collide can be given based on the properties of $K_u$ as in \cite{Xu}.
	\end{proof}
	
	\subsection{Explicit blow-up in $\mathcal{L}(1)$}\label{subsec:TR_Szego_3d-manifold}
	
Having described the structure of invariant manifolds, we shall now focus on the dynamics in $\mathcal{L}(1)$, and demonstrate strong explicit turbulent behaviors there, which is a proof of (\ref{eq:truncated_Szego_exponential_growth_Sobolev}). To this end, solutions in $\mathcal{L}(1)$ are parametrized as
	\beq
	\alpha_0(t) = b(t), \qquad \alpha_{n\geq 1}(t) = \left(b(t)p(t) + a(t)\right)p(t)^{n-1},
	\label{eq:ansatz_3d} 
	\eeq
	where $b,\ a$ and $p$ are complex functions of time. With this ansatz, the equations of motion in (\ref{eq:resonant_eq_BE}) are reduced to a coupled system of ODEs given by
	\begin{align}
	i \dot{p} = & \left(N - \left(1-|p|^2\right)E\right) p + a\bar{b}, \label{eq:truncated_szego_pdot}\\
	i \dot{b} = & (N+E)b + E a \bar{p}, \\
	i \dot{a} = & (N-E) a - E  |p|^2 b p, \label{eq:truncated_szego_adot}
	\end{align}
	where we have made use of the following expressions for the conserved quantities (\ref{eq:H_cq}-\ref{eq:E_consered_quantity})
	\beq
	E = \frac{|bp+a|^2}{\left(1-|p|^2\right)^2}, \qquad N = |b|^2 + \left(1 - |p|^2\right)E,
	\label{eq:E_J_3d_ansatz}
	\eeq
	\beq
	\Ham_{tr} = \frac{1}{4}\left(N^2 + 2 E N - 3 E^2 + 2E S\right), \qquad \text{with } S = (N+E)|p|^2 + \frac{E}{2} |p|^4 +  (a \bar b \bar p + \bar a b p).
	\label{eq:H_S}
	\eeq
	Note that the conservation of $N,\ E$ and $\Ham_{tr}$ implies the conservation of $S$. Through these expressions for the conserved quantities, the evolution of $|\alpha_n(t)|^2$ can be written in terms of $|p(t)|^2$ as
	\beq
	|\alpha_0(t)|^2 = N - \left(1 - |p(t)|^2\right)E, \qquad |\alpha_{n\geq 1}(t)|^2 = \left(1 - |p(t)|^2\right)^2 E |p(t)|^{2n-2}.
	\label{eq:alpha_n_as_N_E_x}
	\eeq
	 Therefore, our study will be focused on the equation for $|p(t)|^2$.
	After elementary algebra utilizing (\ref{eq:E_J_3d_ansatz}-\ref{eq:H_S}), and using the notation $x(t) := |p(t)|^2$, this equation can be expressed in the form
	\beq
	\dot{x}^2 + c_0 + c_1 x + c_2 x^2 + c_3 x^3 + c_4 x^4 = 0,
	\label{eq:xdot_eq_1}
	\eeq
	with the coefficients
	\beqnn
	c_4 =  -\frac{7}{4}E^2, \qquad c_3 =  3E^2 - N E, \qquad c_2 =  N^2 + 2 N E - 3 E^2 + 3E S,
	\eeqnn
	\beq
	c_1 =  4E^2 + 2 N S - 4NE - 6 E S, \qquad c_0 =  S^2.
        \label{Veffc1}
	\eeq
	This equation can be understood as the energy conservation for zero energy trajectories of a quartic nonlinear oscillator, which can be manifested by rewriting it as
	\beq
	\dot{x}^2 + V_{\text{eff}}(x) = 0,
	\label{eq:xdot_eq_2}
	\eeq
with $V_{\text{eff}}$ read off  (\ref{eq:xdot_eq_1}-\ref{Veffc1}). 
	Consequently, given an initial condition, through a standard analysis of $V_{\text{eff}}(x)$ we can determine whether $|\alpha_n(t)|^2$ is static, periodic or whether Sobolev norms with $s > 1/2$ are unbounded. See fig.~\ref{fig:shapes_Veff} for illustrative pictures of $V_{\text{eff}}(x)$ associated with each of these behaviors.
	
	\begin{figure}[t!]
		\centering
		\begin{subfigure}[b]{0.5\textwidth}
			\centering
			\includegraphics[width=8cm]{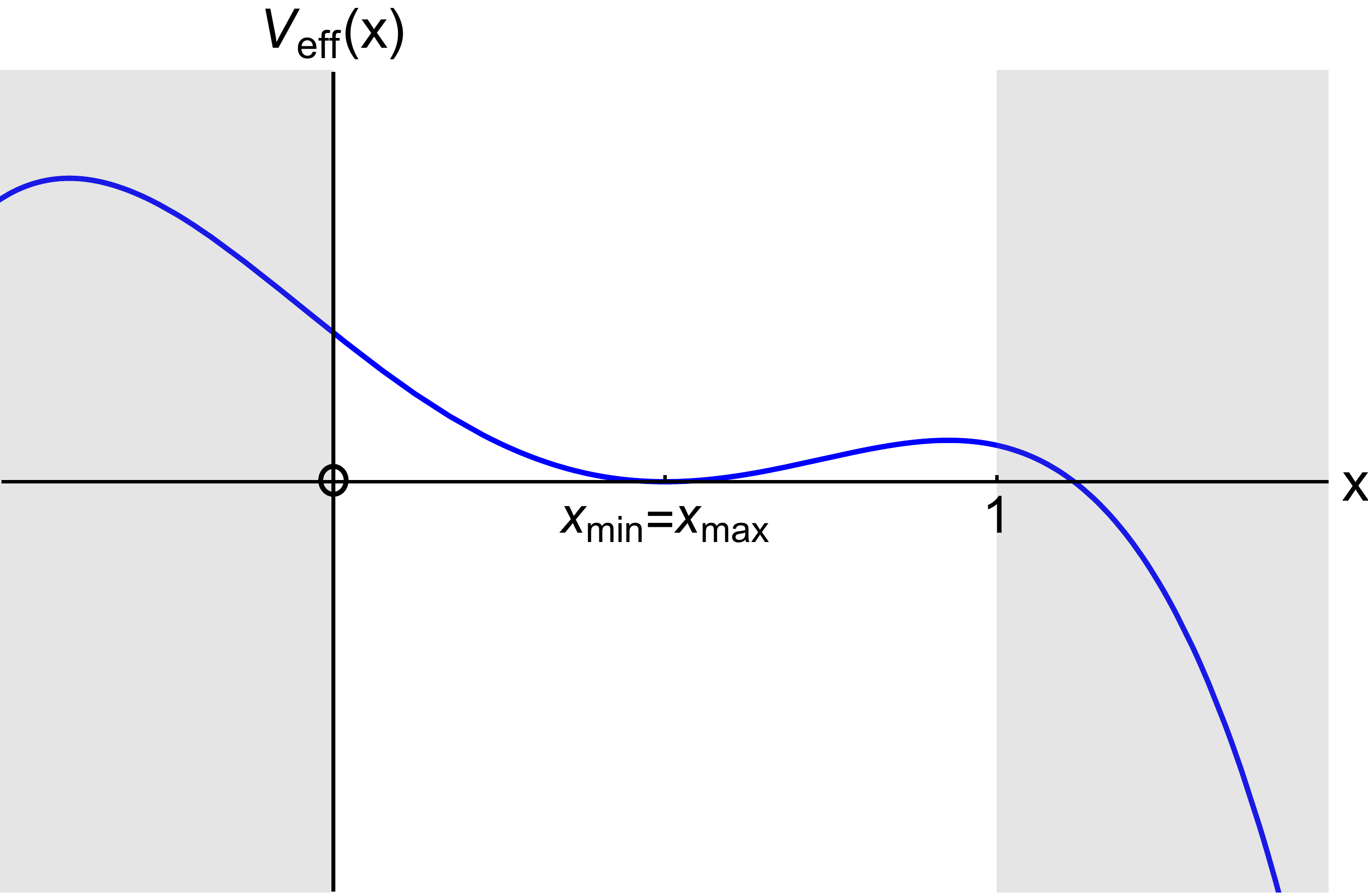}
			\caption{\small Stationary solution.}
			\label{fig:shapes_Veff_a}
		\end{subfigure}%
		\begin{subfigure}[b]{0.5\textwidth}
			\centering
			\includegraphics[width=8cm]{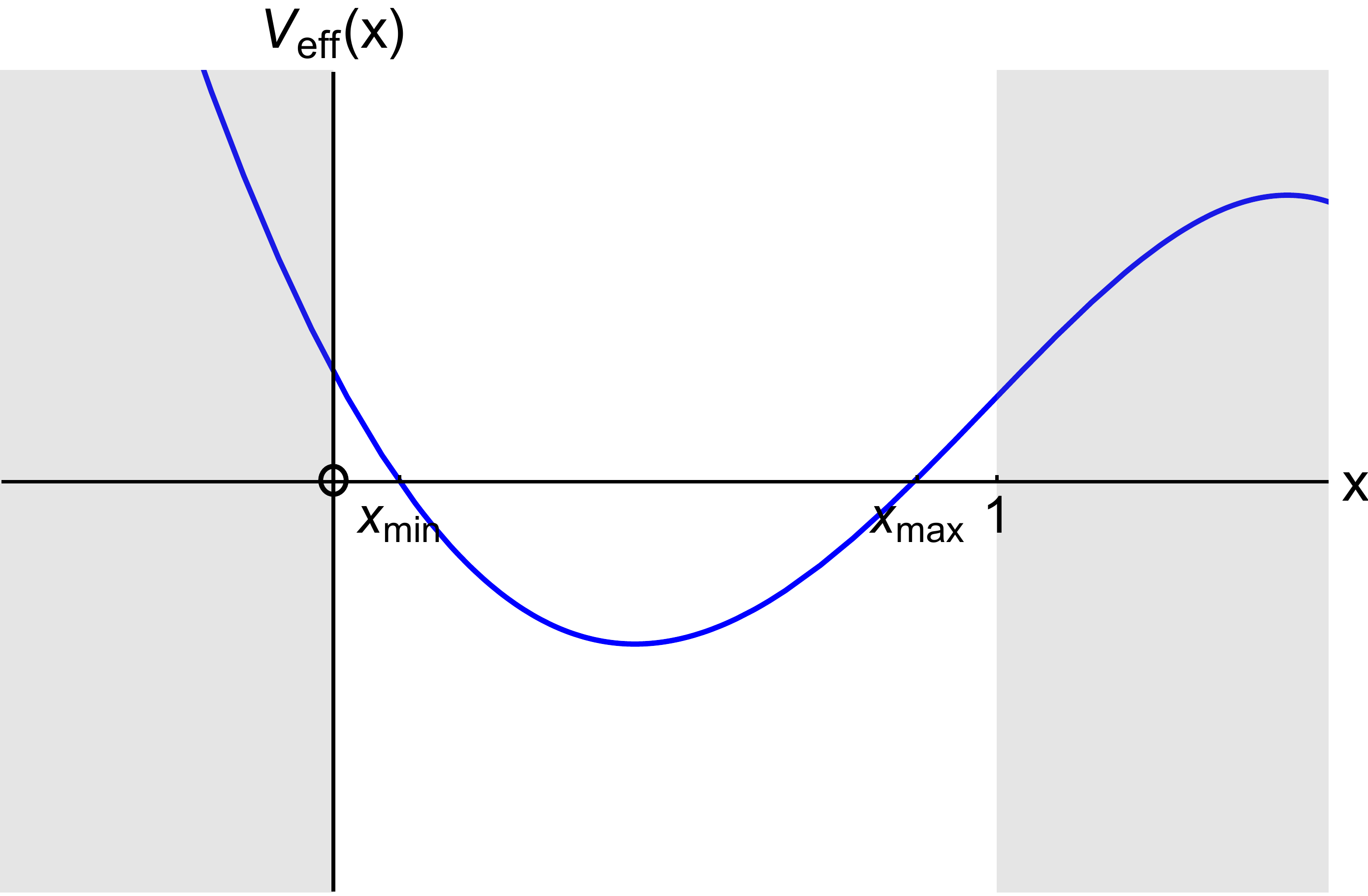}
			\caption{\small Periodic solution.}
			\label{fig:shapes_Veff_b}
		\end{subfigure}%
		
		\begin{subfigure}[b]{0.5\textwidth}
			\centering
			\includegraphics[width=8cm]{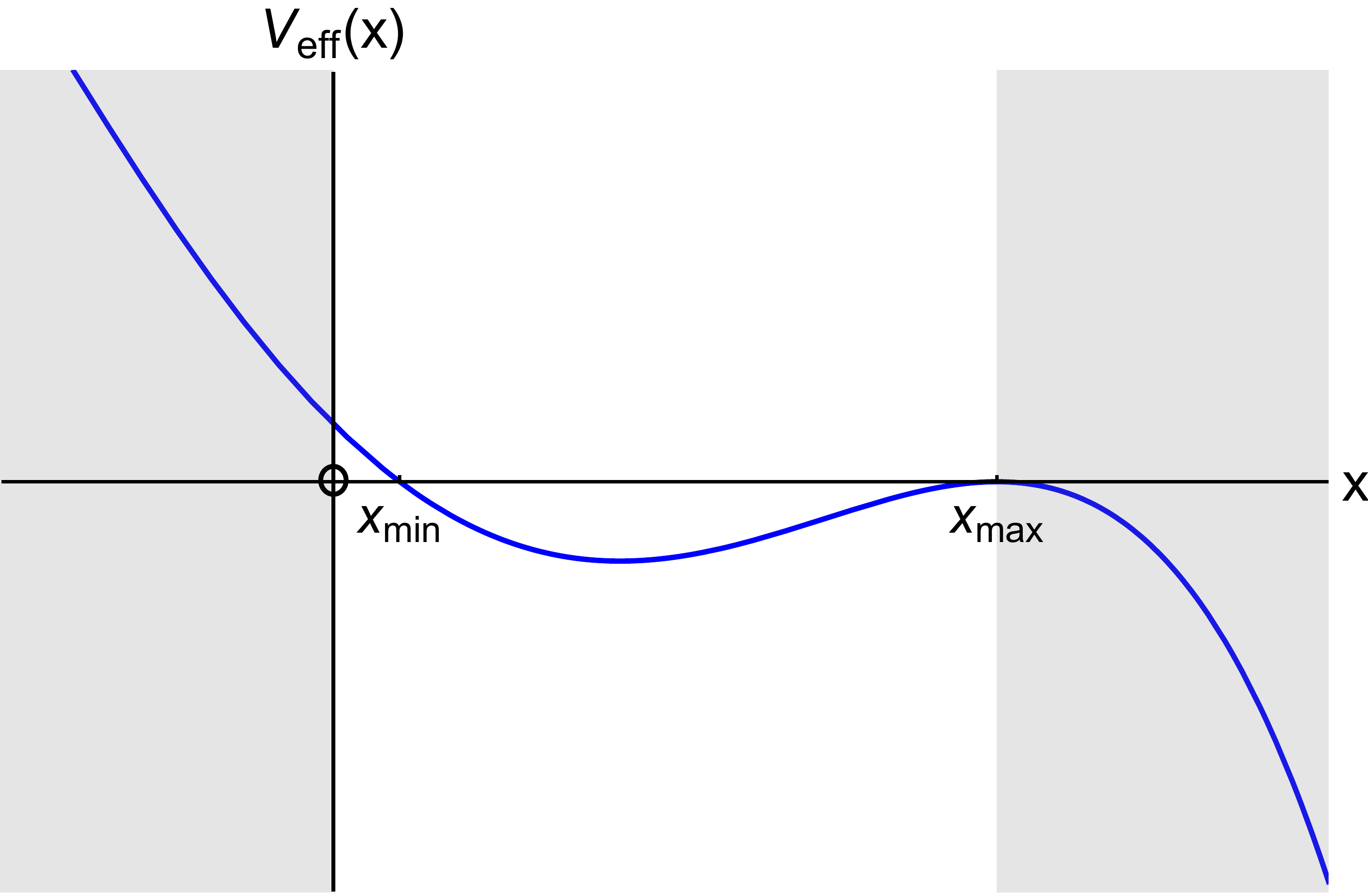}
			\caption{\small Infinite-time blow-up solution.}
			\label{fig:shapes_Veff_c}
		\end{subfigure}%
		\begin{subfigure}[b]{0.5\textwidth}
			\centering
			\includegraphics[width=8cm]{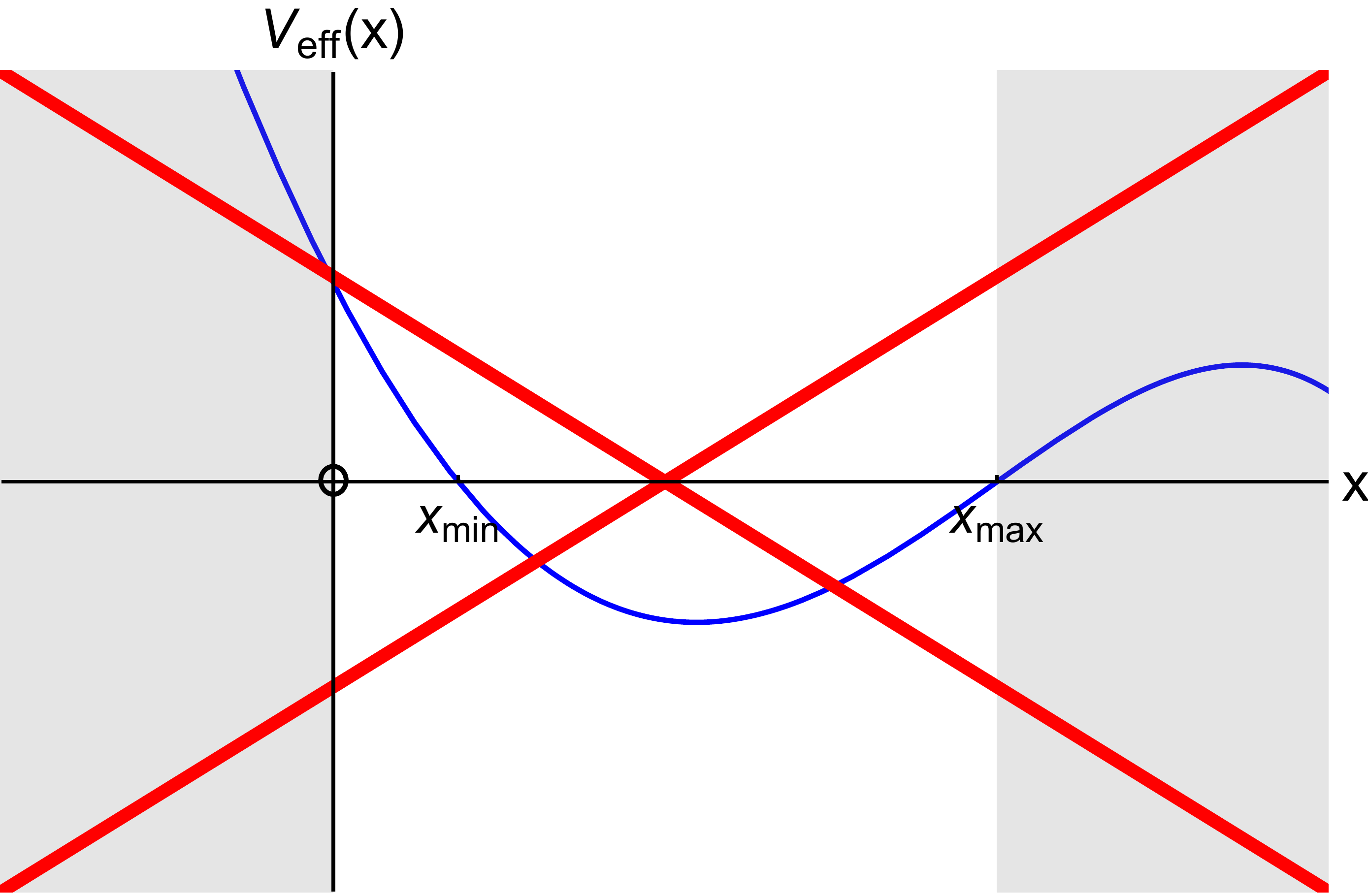}
			\caption{\small Finite-time blow-up solution.}
			\label{fig:shapes_Veff_d}
		\end{subfigure}%
		\caption{\small The shapes of the effective potential for different sets of initial conditions, corresponding to different dynamical regimes of our system. The shaded areas do not correspond to valid configurations of the dynamical variables (they were included to provide a more complete picture of $V_{\text{eff}}(x)$).  The center of the black circle denotes the origin $(0,0)$ and $x_{\text{min}}$ ($x_{\text{max}}$) represents the minimum (maximum) value reached by $x(t)$. Dynamics of the type $(a),\ (b)$ and $(c)$ actually occurs in the evolution of the truncated \Sz{} equation, while $(d)$ is impossible due to the upper bound (\ref{eq:truncated_Szego_global_bound}), and the plot is crossed out in red to highlight this fact. (One should view $(d)$ as an illustration of an imaginary shape of $V_{\text{eff}}(x)$ for which a finite-time blow-up would emerge.)}
		\label{fig:shapes_Veff}
	\end{figure}
	
	We shall now construct explicit initial configurations within the ansatz (\ref{eq:ansatz_3d}) whose evolution displays unbounded Sobolev norms. The exponential bound (\ref{eq:truncated_Szego_global_bound}) prevents any finite-time blow-up, a feature that is reflected in the potential as $V_{\text{eff}}(1)\geq 0$ and $V_{\text{eff}}(1)=0\Leftrightarrow V_{\text{eff}}'(1)~=~0$. The only kind of turbulence present is in the  form of unbounded Sobolev norms growth over an infinite range of time. The corresponding type of the effective potential can be seen in fig.~\ref{fig:shapes_Veff_c}. This set of configurations must satisfy $V_{\text{eff}}(1)=0$, a condition that, in terms of the conserved quantities, takes the form
	\beq
		3E-2(N+S) = 0
	\eeq
	and in terms of the dynamical variables in $\mathcal{L}(1)$,
	\beq
		2|b+a\bar{p}|^2 = |bp+a|^2(1+|p|^2).
	\eeq
	The solution is
	\beq
	a = b p \left(1 - e^{i\lambda} \sqrt{2 \left(1+ \frac{1}{|p|^2} \right)}\right) \qquad \text{with }  \lambda \in \mathbb{R},
	\label{eq:a0_general_ID}
	\eeq
	indicating that only initial conditions within a lower-dimensional submanifold in $\mathcal{L}(1)$ exhibit unbounded Sobolev norm growth. This situation is directly parallel to the $\al$-\Sz{} equation \cite{Xu}.
	Assuming (\ref{eq:a0_general_ID}), the effective potential of (\ref{eq:xdot_eq_2}) takes the form
	\beq
	V_{\text{eff}}(x) = - |b(0)|^4 F \left(1-x\right)^2 (x_{\text{min}} - x) (c - x),
	\eeq
	where $c,\ x_{\text{min}}$ and $F$ are functions of $|p(0)|^2$ and $\lambda$ satisfying the bounds $F>0$, $0\leq x_{\text{min}} < 1$ and $c < x_{min}$ for $0\leq|p(0)|^2 < 1$. Furthermore, $x_{\text{min}}$ denotes the minimum value of $x(t)$, and $c$, another real zero of $V_{\text{eff}}(x)$. The solution of equation (\ref{eq:xdot_eq_2}) subject to this structure of $V_{\text{eff}}(x)$ is
	\beq
	x(t) = \frac{\left(c - x_{\text{min}}\right)\cosh{\left(\omega t + \phi\right)} - c - x_{\text{min}} + 2 c x_{\text{min}}}{\left(c - x_{\text{min}}\right)\cosh{\left(\omega t + \phi\right)} +c + x_{\text{min}} - 2}, \quad \text{with} \quad \omega = |b(0)|^2\sqrt{F(1-c)(1-x_{\text{min}})}
	\eeq
	and $\phi$ such that $x(0) = |p(0)|^2$. The inequalities $c< x_{\text{min}}$, $0\leq x_{\text{min}} < 1$ and $F > 0$ guarantee that if $|b(0)|\neq 0$, then $\omega>0$ and also that $(c - x_{\text{min}})\neq 0$. Hence, $x(t)\equiv |p(t)|^2$ exponentially approaches $1$ at late times:
	\beq
	|p(t)|^2 \sim 1 - \mathcal{O}\left(e^{-\omega t}\right).
	\eeq
	Then, using the expressions for $|\alpha_n(t)|^2$ in terms of $N,\ E$ and $|p(t)|^2$ given in (\ref{eq:alpha_n_as_N_E_x}), Sobolev norms for $s > 1/2$ have the following exponential growth at late times
	\beq
	\|u(t)\|_{H^{s}} \underset{t\to\infty}{\simeq} e^{(2s-1)\frac{\omega}{2} |t|}.
	\label{eq:u_H_s_exp_growth_truncated_szego}
	\eeq
	In fig.~\ref{fig:collapse_wave_function}, we show the evolution of
	\beq
	v(t,\theta) = \sum_{n=1}^{\infty} \alpha_n(t) e^{in\theta}
	\label{eq:v_function}
	\eeq
for one of the initial conditions displaying such exponential growth of Sobolev norms. This function shows a concentration phenomenon, meaning that $v(t,\theta)$ tends to 0 at large $t$ everywhere, except for a single value of $\theta$, where it tends to $4E$. Evidently, such behavior must incur unbounded growth of the derivatives of $v$, which is in turn reflected in the growth of Sobolev norms. Note that $v(t,\theta)$ itself could not possibly blow up by conservation of $N$ and $E$. The blow-up necessarily enters through the derivatives of $v$.

	\begin{figure}[t!]
	\centering	
	\begin{subfigure}[b]{0.5\textwidth}
		\centering
		\includegraphics[width=8cm]{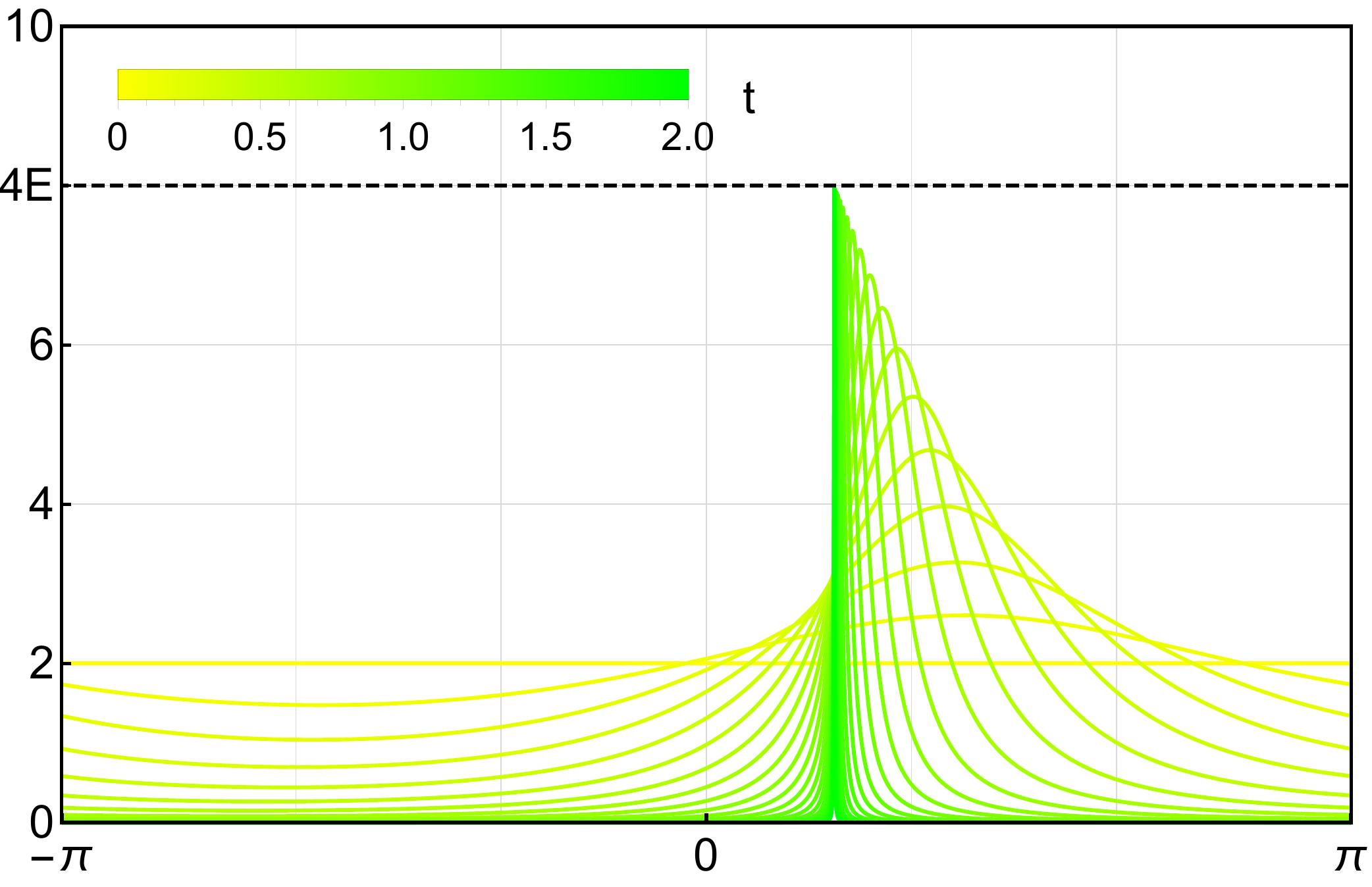}
		\caption{\small $\left(\theta,|v(t,\theta)|^2\right)$}
		\label{fig:collapse_wave_function_1}
	\end{subfigure}%
	\begin{subfigure}[b]{0.5\textwidth}
		\centering		\hspace{2cm}\includegraphics[width=8cm]{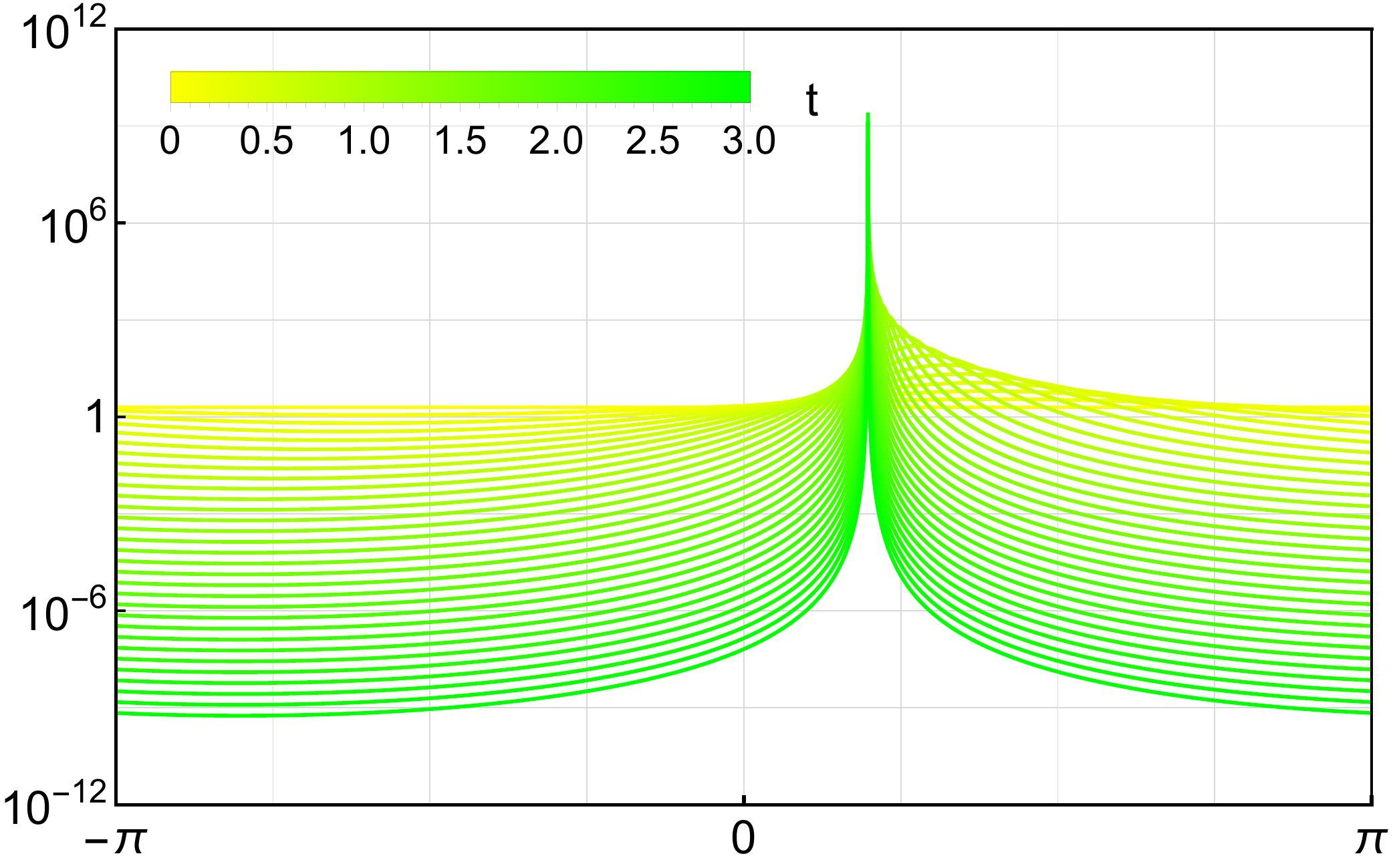}
		\caption{\small $\left(\theta,|\partial_\theta v(t,\theta)|^2\right)$}
		\label{fig:collapse_wave_function_2}
	\end{subfigure}%
		\caption{\small Concentration of the function $v(t,\theta)$ given by (\ref{eq:v_function}) and its derivative at a point for initial data within the family (\ref{eq:a0_general_ID}) characterized by exponential growth of Sobolev norms.  Fig.~\ref{fig:collapse_wave_function_1} shows that $|v(t,\theta)|^2$, despite becoming concentrated at a point, remains finite, converging to $4E$ there (black dashed line). Fig.~\ref{fig:collapse_wave_function_2} shows that the first derivative of $v(t,\theta)$ also becomes concentrated at a point, but in contrast with $v(t,\theta)$, its value at this point is not bounded.}
		\label{fig:collapse_wave_function}
	\end{figure}
	
	\section{The $\beta$-\Sz{} equation}\label{sec:beta_Szego}
	
	Since the truncated \Sz{} equation inherits many properties of the \cuSz, including a common Lax operator $K_u$ and a hierarchy of finite-dimensional invariant manifolds, it is natural to ask whether an interpolating family can be constructed connecting these two equations, retaining such special properties. To explore this question, we define
	\beq
	C_{nmkl}^{\left(\beta\right)} = \begin{cases}
		1 & \text{if } n m k l = 0\\
		1-\beta & \text{if } n m k l \neq 0
	\end{cases}
	\label{eq:C_beta}
	\eeq
	with $\beta \in \mathbb{R}$. Note that these coefficients are simply the linear combination
	\beq
	C^{(\beta)}=\beta C^{(tr)}+(1-\beta)C^{(\text{Sz})}.
	\label{eq:C_beta_linear_combination}
	\eeq
	Therefore, for $\beta = 0$ and $1$ we recover the original systems $C^{(0)} = C^{\left(\text{Sz}\right)}$ and $C^{(1)}~=~C^{\left(tr\right)}$. Additionally, in the limits $\beta \to \pm \infty$, one can rescale  $C^{\left(\beta\right)}$ by $\pm 1/\beta$ to obtain a ``shifted" \Sz{} system (the \cuSz{} with  $\{\alpha_0,\alpha_1,...\}$ replaced by $\{\alpha_1,\alpha_2,...\}$). Using the standard \Sz{} projector and  the shift operators, we can represent the  resonant system (\ref{eq:flow_eq_linear_spectrum}) with the interaction coefficients (\ref{eq:C_beta}) in position space as
	\beq
	i\partial_t u = \Pi(|u|^2u)-\beta S\Pi(|S^\dagger u|^2 S^\dagger u).
	\label{eq:beta_Szego_eq}
	\eeq
	We shall call this system the $\beta$-\Sz{} equation in analogy to the $\alpha$-\Sz{} equation, although there are important differences between these two deformations; see section \ref{sec:Discussion} for further discussion. The main properties of the $\beta$-\Sz{} equation, which we will analyze below, are:
	\begin{itemize}
		\item These systems possess one Lax pair
		\beq
		\frac{dK_{u}}{dt} = \left[C_u-\beta B_{S^\dagger u}, K_u\right].
		\label{eq:Lax_pair_beta_Szego}
		\eeq
		\item Sobolev norms of solutions of (\ref{eq:beta_Szego_eq}) subject to initial conditions $u_0 \in H_{+}^{s}\left(\mathbb{S}^1\right)$ with $s>1$, cannot grow faster than exponentially,
		\beq
			\|u(t)\|_{H^{s}} \leq \|u_0\|_{H^{s}} e^{C |t|}\qquad \text{with } C>0.
			\label{eq:beta_Szego_gobal_bound}
			\eeq
		\item There exist complex invariant manifolds $\mathcal{L}(D)$ given in (\ref{eq:L_odd}).
		\item For $\beta < 0$, the Sobolev norms for $u_0 \in \mathcal{L}(1)$ remain bounded,
		\beq
			\forall s \geq 0, \qquad \|u(t)\|_{H^{s}} \leq C, \qquad \text{with }\ C > 0.
			\label{eq:beta_Szego_bound_beta_less_zero}
		\eeq
		\item For $\beta > 0$, there exist $u_0 \in \mathcal{L}(1)$ such that the Sobolev norms with $s\hspace{-0.08cm} >\hspace{-0.08cm} \frac{1}{2}$ grow exponentially at late times,
		\beq
		\forall s > \frac{1}{2}, \qquad \|u(t)\|_{H^{s}} \underset{t\to\infty}{\simeq} e^{(2s-1)c|t|}.
		\label{eq:beta_Szego_exponential_bound}
		\eeq
		\item For $\beta \in (9,\infty)$, there exist $u_0 \in \mathcal{L}(1)$ such that Sobolev norms with $s\hspace{-0.08cm} >\hspace{-0.08cm} \frac{1}{2}$ have a polynomial growth at late times,
		\beq
		\forall s > \frac{1}{2}, \qquad \|u(t)\|_{H^{s}} \underset{t\to\infty}{\simeq} t^{(2s-1)}.
		\label{eq:beta_Szego_polynomial_bound}
		\eeq
		\item For $\beta \neq 1$, the $\beta$-\Sz{} equation contains the cubic \Sz{} equation as one of its invariant manifolds. It can be observed by restricting the initial conditions to odd modes (setting even modes to 0). Hence, if $\beta\neq 1$, the $\beta$-\Sz{} equation has subsectors with all the properties displayed in section \ref{subsec:Szego}.
	\end{itemize}
	A majority of these properties straightforwardly arise from the fact that the $\beta$-\Sz{} equation is a combination of the cubic \Sz{} and the truncated \Sz{} equations given by (\ref{eq:C_beta_linear_combination}). For the rest of our treatment, we shall focus on proving (\ref{eq:beta_Szego_bound_beta_less_zero}-\ref{eq:beta_Szego_polynomial_bound}), which requires explicit computations.\footnote{We shall not explore the dynamics in $u\in\mathcal{L}(D)$ for $D>1$. One may expect that these manifolds are integrable in the Liouville sense by analogy with \cite{GG,Xu,Thirouin}; namely, that they admit $2D+1$ conserved quantities in involution, the same as the number of dimensions. This is not a priori guaranteed by the Lax pair, and requires further analysis. The Lax pair provides $D$ conserved quantities, which in addition to the Hamiltonian and the $L^2$ norm give $D+2$ conservation laws. When $D>1$, this number by itself is insufficient for Liouville integrability. For the cubic \Sz{} equation, the remaining conserved quantities come from the second Lax pair (\ref{LaxSz}), while for the $\alpha$-\Sz{} and the quadratic \Sz{} equations they were found using different methods.}
	
	\subsection{Explicit blow-up in $\mathcal{L}(1)$}\label{subsec:Deformation_Szego_3d-manifold}

In order to  prove (\ref{eq:beta_Szego_bound_beta_less_zero}-\ref{eq:beta_Szego_polynomial_bound}) and provide explicit examples of such solutions,	
we essentially repeat our previous analysis of the truncated \Sz{} equation within $\mathcal{L}(1)$, but now at generic values of $\beta$. To this end, we write $\mathcal{L}(1)$ in the form (\ref{eq:ansatz_3d}), so that the equations of motion are reduced to
	\begin{align}
	i\dot{p} =& \left(N -\beta (1-|p|^2)E\right) p + a\bar{b},\\
	i\dot{b} =& (N+E)b + E a\bar{p},\\
	i\dot{a} = & \left(N - \beta E\right) a - \beta E |p|^2  b p.
	\end{align} 
	While $\beta$ appears on the right-hand side of these equations, the equation for $\dot{x}$ does not explicitly depend on this parameter (remember the definition $x(t) := |p(t)|^2$)
	\beq
	i\dot{x} = \left(a\bar b\bar p - \bar a b p\right).
	\eeq
	Expressions for $N$ and $E$ in (\ref{eq:E_J_3d_ansatz}) are also $\beta$-independent, and the only dependence comes from the Hamiltonian, which can be reduced to the conserved quantity
	\beq
	S = (N+E) x + \frac{\beta}{2} E x^2 + (a\bar b \bar p + \bar a b p),
	\eeq
	as was done in (\ref{eq:H_S}). In analogy with (\ref{eq:xdot_eq_2}), we make use of an effective potential $V_{\text{eff}}(x)$ to analyze the evolution of $x(t)$. It is a quartic polynomial for a generic $\beta$, but for specific values of this parameter it can be reduced to a cubic polynomial, or for $\beta = 0$ $\left(C^{\left(\text{Sz}\right)}\right)$, to a quadratic polynomial. Therefore, the family of models $C^{(\beta)}$ displays different phenomena for different values of $\beta$ and $p(0)$. Focusing our attention on initial conditions potentially displaying unbounded Sobolev norms, we observe that $V_{\text{eff}}(1)=0\Rightarrow V_{\text{eff}}'(1)=0$ and also that $V_{\text{eff}}(1) \geq 0$, preventing any blow-up in finite time, in agreement with the bound (\ref{eq:beta_Szego_gobal_bound}). As in the case of the truncated \Sz{} equation, turbulent solutions must satisfy $V_{\text{eff}}(1)=0$, a condition that, in terms of the conserved quantities, takes the form
	\beq
		(2+\beta)E - 2(N+S) = 0.
		\label{eq:S_func_N_E_beta}
	\eeq
	In terms of the dynamical variables,
	\beq
		2|b+a\bar p|^2 = \beta |bp+a|^2(1+|p|^2).
		\label{eq:constraint_V1_0_beta}
	\eeq
	It shows that, for $\beta < 0$, this conditions is not satisfied for nontrivial configurations and therefore, Sobolev norms of $u_0\in\mathcal{L}(1)$ remain bounded. For $\beta = 0$, the cubic \Sz{} system, this constraint is only satisfied for $b = -a\bar{p}$, which corresponds to stationary solutions as we will see later. The case $\beta > 0$ is different:  (\ref{eq:constraint_V1_0_beta}) is solved by 
	\beq
	a = b p \frac{\left(\left(1+|p|^2\right)\beta-2\right) +\sqrt{2\beta}e^{i\lambda} (1-|p|^2)\sqrt{1+\frac{1}{|p|^2}}}{(2-\beta)|p|^2 - \beta} \qquad \text{with } \lambda \in \mathbb{R},
	\label{eq:a_blow-up_beta}
	\eeq
	where certain combinations of $\beta$, $p$ and $\lambda$ lead to $\|u(t)\|_{H^{s}}\to\infty$ as $t\to \infty$ for $s>1/2$. Fig.~\ref{fig:phase_diagram_beta_x0} shows a sketch of the different regions in the $(\beta,p(0))$-plane with $\lambda = 0$ and $\pi$.
	\begin{figure}[t!]
		\centering
		\begin{subfigure}[b]{0.5\textwidth}
			\centering
			\includegraphics[width=8cm]{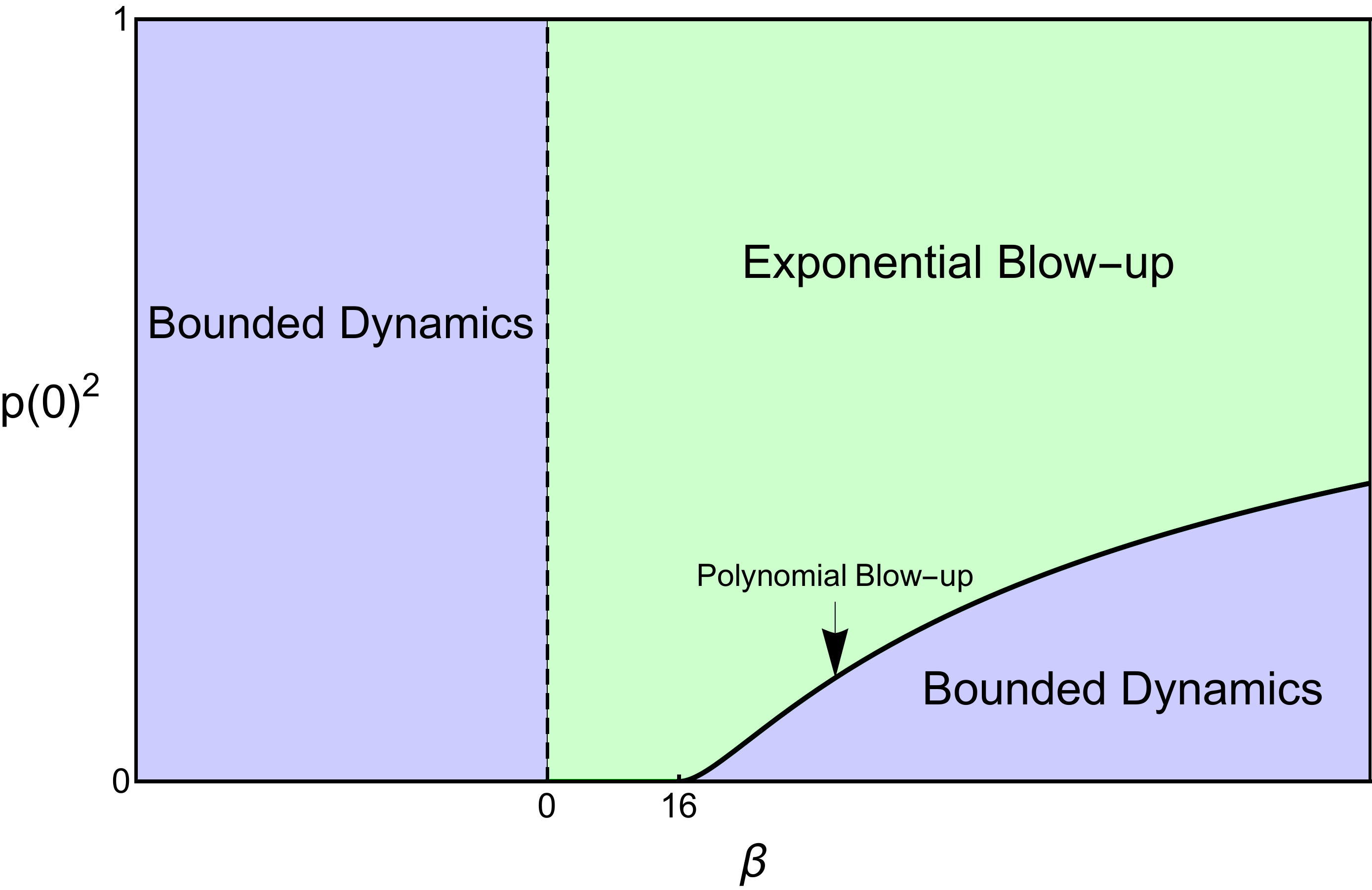}
			\caption{\small $\lambda = 0$}
			\label{fig:lambda_0}
		\end{subfigure}%
		\begin{subfigure}[b]{0.5\textwidth}
			\centering
			\includegraphics[width=8cm]{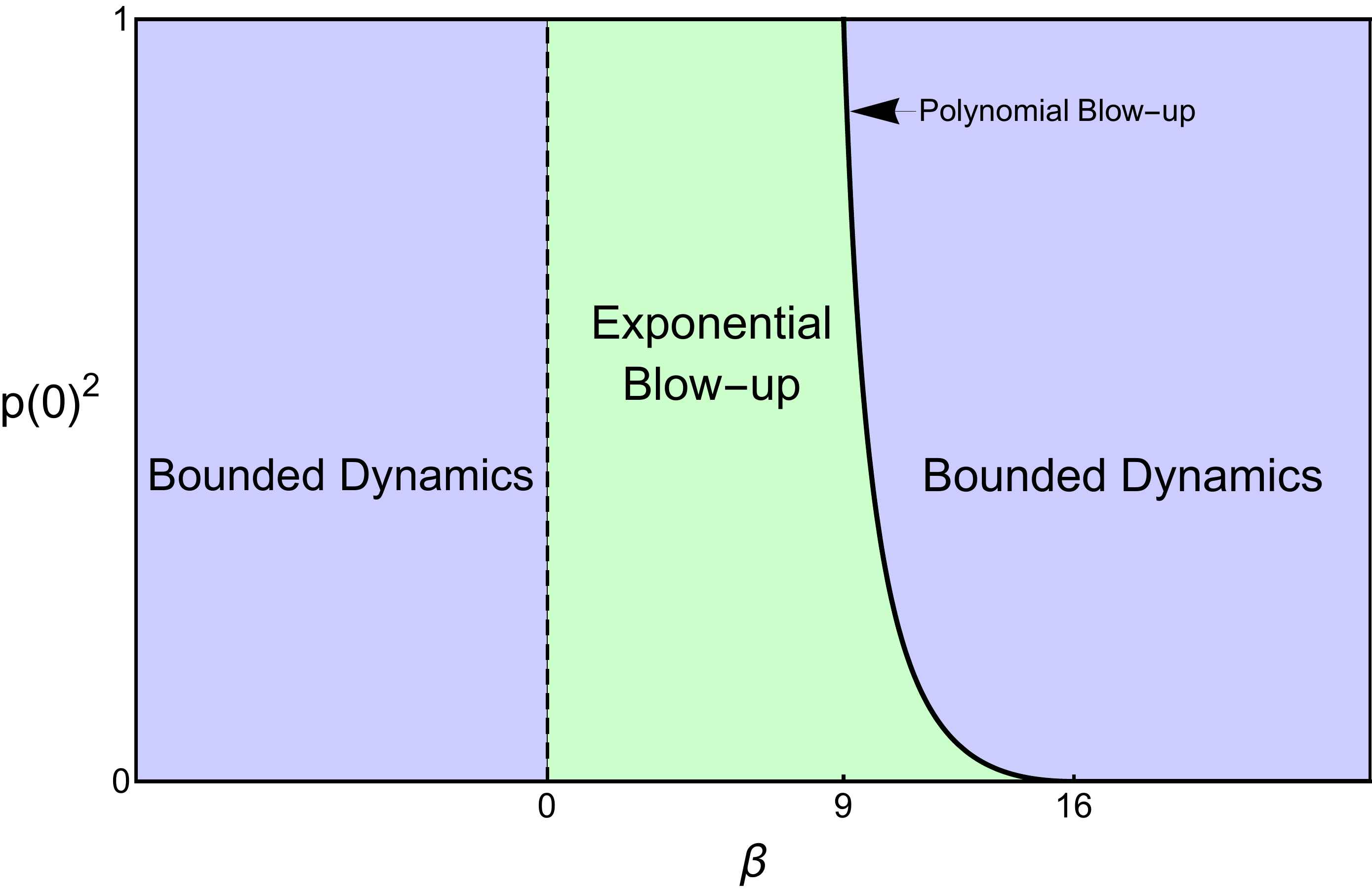}
			\caption{\small $\lambda = \pi$}
			\label{fig:lambda_pi}
		\end{subfigure}%
		\caption{\small Regions in the $(\beta,\ p(0))$-plane with different dynamics of initial conditions subject to (\ref{eq:a_blow-up_beta}) with $|b|\neq 0$ and $\lambda = 0,\pi$. Blue areas consist of initial configurations for which the Sobolev norms are bounded. Green areas consist of initial configurations with exponential growth of $\|u(t)\|_{H^{s}}$ for $s>1/2$. Black solid lines represent the boundary region with a polynomial growth. Black dashed lines are placed at $\beta= 0$, namely $C^{\left(\text{Sz}\right)}$, and demarcate a transition between systems with bounded and unbounded Sobolev norms. The pictures are naturally extended to infinity to the left and to the right.}
		\label{fig:phase_diagram_beta_x0}
	\end{figure}

	We shall now focus the discussion on the case $\lambda=0$, where the equations are simple enough to extract explicit expressions and this is a good representative of the behavior of the initial conditions (\ref{eq:a_blow-up_beta}) for generic $\lambda$.  In this case, under condition (\ref{eq:a_blow-up_beta}), the potential $V_{\text{eff}}(x)$ becomes
	\beq
	V_{\text{eff}}(x) = - |b(0)|^4 F \left(1-x\right)^2\left(x_{0} - x\right)\left(c - x\right),
	\eeq
	where $x_0 = x(0)$ and $F$ and $c$ are functions of $p(0)$ and $\beta$. For $0<\beta < 16$ we find that $V_{\text{eff}}(x)< 0$ for $x_0 < x < 1$ and Sobolev norms $\|u(t)\|_{H^{s}}$ with $s>1/2$ have exponential growth. For $\beta \geq 16$ there are three possibilities depending on $\beta$ and $p(0)$:
	\begin{itemize}
		\item  The additional zero $c \notin \left[x_0,1\right]$. In this case, the Sobolev norms with $s>1/2$\\ $\|u(t)\|_{H^{s}} \simeq e^{\left(2s - 1\right)\omega|t|/2}$ at late times.
		\item The additional zero $c \in [x_0,1)$. In this case, all Sobolev norms are bounded.
		\item The additional zero $c = 1$ (at the threshold between the two previous behaviors). In this case, the Sobolev norms with $s>1/2$ grow as $\|u(t)\|_{H^{s}} \simeq t^{\left(2s-1\right)}$ at late times.
	\end{itemize} 
	 It can be shown that the condition $c=1$ is solved by
	\beq
	x_0 = \frac{\left(\sqrt{\beta} - 4\right)^2}{\beta - 8}, \qquad \text{for }\ \beta \geq 16.
	\label{eq:x0_beta_polynomial}
	\eeq
	The explicit expressions for $x(t)$ as a function of time are then
	\beq
	x(t) = \begin{dcases}
		\frac{(c-x_0) \cosh \left(\omega t\right) - c - x_0 + 2 c x_0}{(c - x_0) \cosh\left(\omega t\right) + c + x_0 - 2} & \text{for } x_0 \in [0,1) \quad \text{if } 0 < \beta < 16 \\[15pt]
		\frac{(c-x_0) \cos \left(\omega t\right) - c - x_0 + 2 c x_0}{(c - x_0) \cos\left(\omega t\right) + c + x_0 - 2} & \text{for } x_0 \in \big{[}0,\frac{\left(\sqrt{\beta} - 4\right)^2}{\beta - 8}\bigg{)} \quad \text{if } \beta > 16 \\[15pt]
		\hspace{2.5cm}\frac{ \tilde{c} t^2 + x_0}{\tilde{c} t^2 + 1} & \text{for } x_0 = \frac{\left(\sqrt{\beta} - 4\right)^2}{\beta - 8} \quad \text{if } \beta \geq 16 \\[15pt]
		\frac{(c-x_0) \cosh \left(\omega t\right) - c - x_0 + 2 c x_0}{(c - x_0) \cosh\left(\omega t\right) + c + x_0 - 2} & \text{for } x_0 \in \left(\frac{\left(\sqrt{\beta} - 4\right)^2}{\beta - 8},1\right) \quad \text{if } \beta \geq 16
	\end{dcases}
	\eeq 
	where $\omega = |b(0)|^2|\sqrt{F(1-c)(1-x_0)}|$ and $\tilde{c} = - |b(0)|^4 F (1-x_0)^2/4$ (with $\tilde{c} > 0$ for (\ref{eq:x0_beta_polynomial})).
	
	 As can be observed in fig.~\ref{fig:phase_diagram_beta_x0}, for $\lambda = \pi$, we reach to similar conclusions, but in this case the transition ($c=1$) between bounded and unbounded Sobolev norms is placed at 
	\beq
	x_0 = \frac{\left(\sqrt{\beta} - 4\right)^2}{\beta - 8}, \qquad \text{for }\ \beta \in (9,16],
	\label{eq:x0_beta_polynomial_v2}
	\eeq
	and also displays polynomial growth of Sobolev norms. For generic $\lambda$, the scenario is the same: we find combinations of the parameters $\beta$ and $p(0)$ that lead to exponential growth of Sobolev norms, other combinations for which these norms are bounded, and a curve (depending on $\lambda$), separating the two previous behaviors, that displays polynomial growth of Sobolev norms. Note that the cases $\lambda= 0$ and $\pi$ show that $\beta\in(9,\infty)$ is sufficient for the existence of initial conditions $u_0\in\mathcal{L}(1)$ with a polynomial growth of Sobolev norms. This range of values of $\beta$ is also necessary. It can be seen after imposing the appropriate conditions over $V_{\text{eff}}(x)$; namely, $V_{\text{eff}}(1) = V_{\text{eff}}''(1) = V_{\text{eff}}(x_0) = 0$ and also $V_{\text{eff}}(x) < 0$ for $x\in(x_0,1)$ with $x_0 \in[0,1)$. As we explained above, the first condition is satisfied by (\ref{eq:S_func_N_E_beta}). For the second condition, after expressing $V_{\text{eff}}(x)$ in terms of $N$, $E$ and $x$ with (\ref{eq:S_func_N_E_beta}), we obtain the relation
		\beq
			N = (1\pm \sqrt{\beta})^2 E.
			\label{eq:N_func_E_beta}
		\eeq
		Due to (\ref{eq:S_func_N_E_beta}) and (\ref{eq:N_func_E_beta}), the above conditions involving $x_0$ are only satisfied for $\beta > 9$. 
		  
	\begin{figure}[t!]
		\centering
		\begin{subfigure}[b]{0.5\textwidth}
			\centering
			\includegraphics[width=8cm]{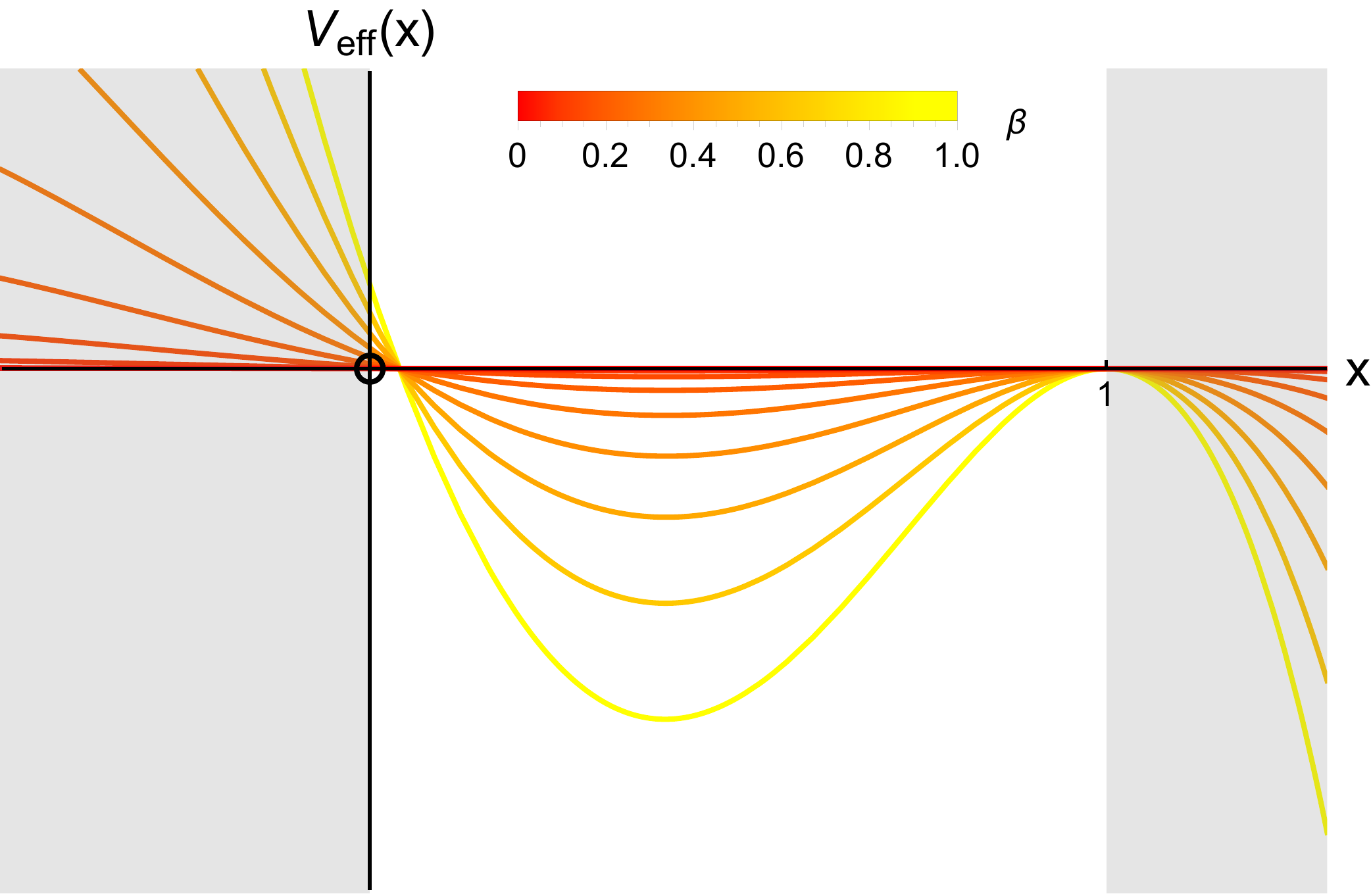}
			\caption{\small $\beta \geq 0$}
			\label{fig:Veff_Limit_beta_0_b}
		\end{subfigure}%
		\begin{subfigure}[b]{0.5\textwidth}
			\centering
			\includegraphics[width=8cm]{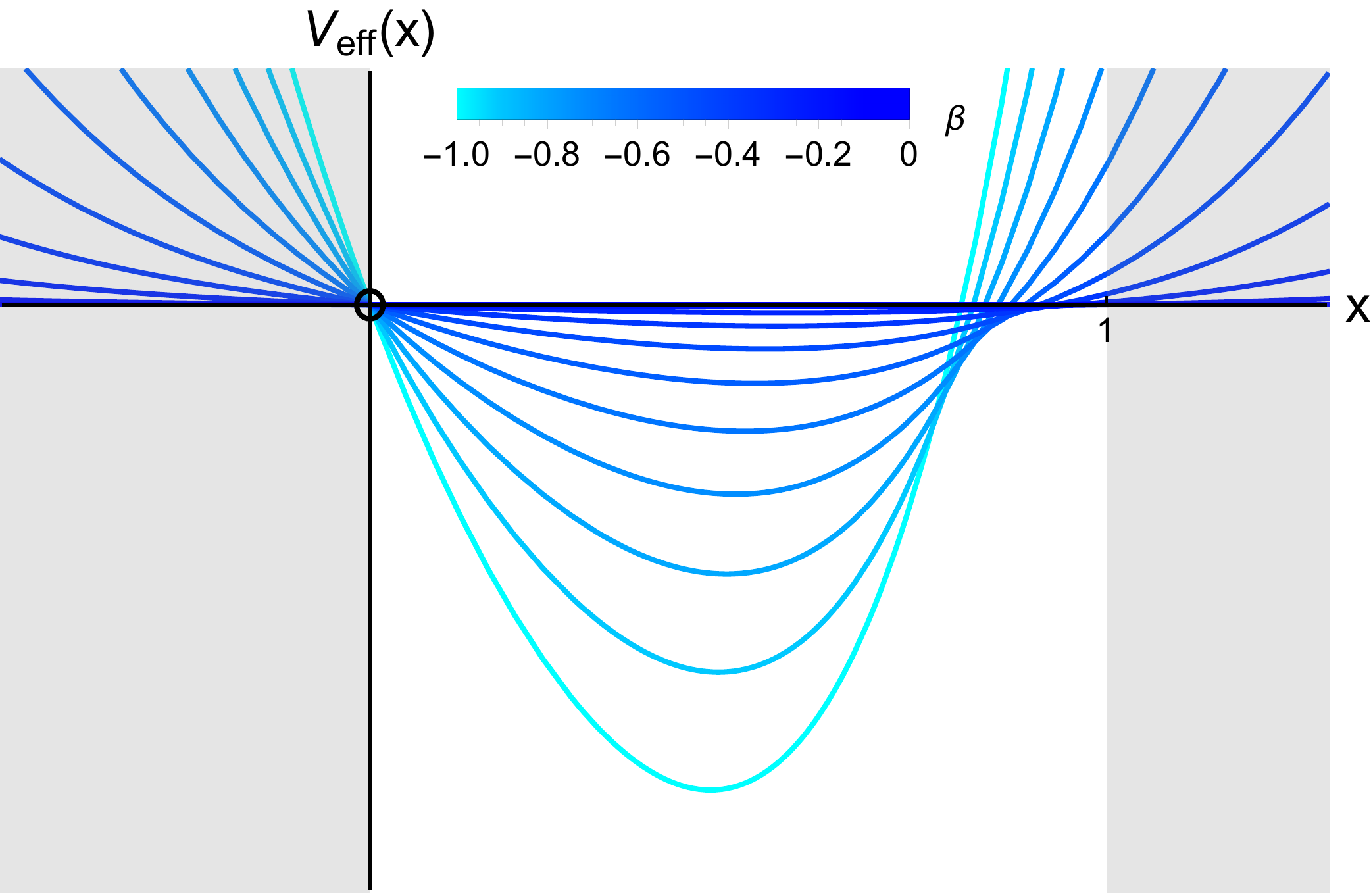}
			\caption{\small $\beta \leq 0$}
			\label{fig:Veff_Limit_beta_0_a}
		\end{subfigure}%
		\caption{\small Effective potential for the initial configuration (\ref{eq:a_blow-up_beta}) with $b = (2-\beta)p\bar{p} - \beta$, $p=1/5$. For positive $\beta$, the potential has zeros at $0$ and $1$; unlike negative $\beta$, where it has zeros at $0$ and at a value less than 1. These plots show that $V_{\text{eff}}(x)$ converges to the x-axis for $\beta \to 0$. This behavior has been observed for generic $|p|\in [0,1)$, indicating the existence of a family of stationary solutions at the threshold $\left(C^{\left(\text{Sz}\right)}\right)$ of models with bounded and unbounded Sobolev norms. The center of the black ring is placed at $(0,0)$ and the shaded areas do not correspond to valid configurations of the dynamical variables.}
		\label{fig:Veff_Limit_beta_0}
	\end{figure}
	
	As we can see in fig.~\ref{fig:phase_diagram_beta_x0}, the \cuSz{} forms a transition point between systems with bounded and unbounded Sobolev norms. It is natural to wonder which solutions of \cuSz{} separate these behaviors. Fig.~\ref{fig:Veff_Limit_beta_0} provides a visual illustration of the transition. Setting $\beta = 0$ and $b = -\bar p$, the initial condition (\ref{eq:a_blow-up_beta}) reduces to a family of stationary solutions for the \cuSz{} \cite{GG}. Its expression is more recognizable from the literature in terms of the generating function
	\beq
	u(t,z) = e^{-i t}\frac{\bar p - z}{1 - p z}, \qquad \text{with }\ u(t,z) = \sum_{n=0}^{\infty} \alpha_n(t) z^n.
	\label{eq:stationary_solution_Szego}
	\eeq
	This result makes us wonder whether one can find other modifications of the \cuSz{} such that (\ref{eq:stationary_solution_Szego}) changes into a turbulent solution.

	In addition to our analysis of the $\beta$-\Sz{} equation, we present, in appendix B, an even bigger family of deformations of the \cuSz{} that retain some of its Lax integrability and invariant manifolds and exhibit unbounded Sobolev norm growth. The mode couplings are of the following form (and they can be freely combined with an arbitrary $\alpha$-deformation):
\beq
\begin{cases}
	C_{0000} = \gamma, &\\
	C_{n0n0} = \delta_1 + \delta_2 n, & \text{for } n\neq 0,\\
	C_{nmkl} = 1, & \text{other cases with } nmkl = 0,\\
	C_{nmkl} = 1-\beta, & nmkl \neq 0.
\end{cases}	
\label{eq:delta_gamma_deformations_v2}
\eeq

It would be interesting to study more systematically which deformations of the \cuSz{} respect the invariant manifolds $\mathcal{L}\left(D\right)$ or admit a Lax pair based on the operator $K_u$, but we shall not pursue it here. Similar type of analysis was performed in \cite{BBE} for a related question, namely, which resonant systems of the form (\ref{eq:flow_eq_linear_spectrum}) respect another explicitly defined three-dimensional invariant manifold, different from $\mathcal{L}\left(1\right)$.

	
	\section{Discussion and outlook}\label{sec:Discussion}

We have presented a large family of modifications of the \cuSz{} beyond the $\al$-\Sz{} equation of \cite{Xu} that retain its Lax pair structure
and a hierarchy of finite-dimensional dynamically invariant manifolds. A central role in this family is played by the truncated \Sz{} system (\ref{eq:resonant_eq_BE}-\ref{eq:truncated_Szego_eq}), where a majority of the Fourier mode couplings present in the original \cuSz{} have been eliminated. The systems we have introduced can be explicitly analyzed within the simplest three-dimensional invariant manifold given by (\ref{eq:ansatz_3d}), and display a variety of turbulent cascades, including unbounded exponential or polynomial growth of Sobolev norms.
These cascades are stronger than what is seen in the original \cuSz, which is particularly striking for the truncated \Sz{} equation, since naively, one would imagine that eliminating couplings
between different sets of Fourier modes should weaken rather than strengthen turbulent cascades. One is thus encouraged to rethink the role played
by mode couplings in turbulent phenomena.
	
	Our systems display parallels to other deformations of the \cuSz{} exhibiting unbounded Sobolev norm growth, such as the $\alpha$-\Sz{} \cite{Xu} and the damped \Sz{} \cite{GG3} equations; nevertheless, there are significant differences. We highlight the main differences between the $\alpha$-\Sz{} and the $\beta$-\Sz{} equations (similar remarks could be made about the damped \Sz{} equation, which is further apart from our models):
	\begin{itemize}
		\item The $\alpha$-\Sz{} model (\ref{eq:alpha_Szego_modes}) introduces a deformation in the linear part of the equation for the lowest mode, while the $\beta$-\Sz{} model implements a modification of the cubic part, keeping the system within the resonant class  (\ref{eq:flow_eq_linear_spectrum}).
		\item The $\al$-deformation, or its extension in (\ref{eq:alpha_deformation_extension}), only explicitly affect the lowest mode. The $\beta$-deformation nontrivially modifies the equations of motion for all modes.
		\item After an appropriate rescaling, the $\alpha$-\Sz{} equation is reduced to three relevant systems (\ref{eq:alpha_Szego_v2}) only depending on $\text{sgn}(\alpha)$. In the case of the $\beta$-\Sz{} equation, changes in $\beta$ cannot be absorbed into rescaling, leaving an essentially continuous family of systems.
		\item No explicit solutions with polynomial growth of Sobolev norms are known for the $\alpha$-\Sz{}, specifically for initial conditions in $\mathcal{L}\left(1\right)$. Such solutions are seen for the $\beta$-\Sz{} system for some values of the parameters, see fig.~\ref{fig:phase_diagram_beta_x0}.
		\item The $\alpha$ and $\beta$ deformations can be implemented simultaneously, together with a few further deformations described in appendix B.
	\end{itemize}

We would like to make a further  brief digression that highlights, from a perspective rather different from our main treatment, the distinction between the $\alpha$-\Sz{} equation and the $\beta$-deformations, in particular,
the truncated \Sz{} equation. All of the systems we have considered here are Hamiltonian, and the standard procedure of {\it quantization} may be applied to such systems, according to the basic principles of quantum mechanics. The generalities of quantization of resonant systems of the form 
(\ref{eq:flow_eq_linear_spectrum}) have been considered in \cite{EP}, with connections to the extensive lore of the quantum chaos theory \cite{QCh}. One then studies the corresponding quantum energy spectra, which are in turn
expected to display different distributions of distances between neighboring levels, depending on the integrability properties of the system. By doing so, one discovers that the cubic \Sz{} system is extremely special, displaying a purely integer energy spectrum \cite{EP}, while a generic integrable system is expected
to display a Poissonian distribution of energy level distances \cite{QCh}. One is thus led to believe that the \cuSz{} possesses structure beyond ordinary integrability
(an explicit example of that is two inequivalent Lax pairs, as opposed to just one). If one turns on the $\al$-deformation (or the related $\de$-deformations from appendix B), the quantum energy spectrum is no longer integer, but the distribution of energy level distances is nowhere close to Poissonian, with too many small energy level gaps. On the other hand, the truncated \Sz{} system, in its quantum version, displays a perfectly
Poissonian distribution of energy level spacings, making it an excellent candidate for a generic Lax-integrable system within the resonant class  (\ref{eq:flow_eq_linear_spectrum}). This may make the truncated \Sz{} system an attractive playground for quantum chaos and integrability studies of the type undertaken in \cite{EP}, quite far from the topics that initially stimulated our search for this system.
	
	In all deformations considered in our treatment, a special role is played by mode 0. One could ask what happens if this role is swapped with another mode. 
For example, instead of (\ref{eq:C_beta}), we could consider the following modification of the interactions that do not involve mode 1:
	\beq
	C_{nmkl} = \begin{cases}
		1 & \text{if } (n-1) (m-1) (k-1) (l-1) = 0,\\
		1-\beta & \text{if } (n-1) (m-1) (k-1) (l-1) \neq 0.
	\end{cases}
	\label{eq:C_mode_1}
	\eeq
While we did not analyze the general properties of this system, we know that, after restricting the initial conditions to odd modes, the dynamics of (\ref{eq:C_mode_1}) is governed by $C^{\left(\beta\right)}$. Hence, for $\beta> 0$ it has solutions with unbounded Sobolev norms. This trivial argument can be extended to other similar deformations of the \Sz{} equation anchored on other modes. 

We conclude with a `teaser' regarding finite-time turbulent blow-up, a question that has indirectly led us to the main discoveries presented in this article.
Finite-time turbulent blow-up is known (from numerical simulations) to take place in extremely complicated (and physically interesting) resonant systems
emerging in Anti-de Sitter spacetimes \cite{FPU,CEV,BMR}. It would be very desirable to have a simple explicit resonant system in which this
phenomenon can be analyzed. Finite-time blow-up cannot happen in the \cuSz, or any of the other systems considered in this article, on account
of the exponential upper bounds on Sobolev norm growth. Our numerical experiments indicate, however, that finite-time turbulent blow-up does
happen in a few simple closely related systems within the resonant class (\ref{eq:flow_eq_linear_spectrum}). More specifically, we have considered
interaction coefficients of the form
	\beq
	C_{nmkl} = (n+1)^G(m+1)^G(k+1)^G(l+1)^G \qquad \text{with }G>0,
	\eeq
	(note that $G=0$ is the \cuSz), as well as a truncated version of these systems analogous to the truncated \Sz{} equation (if all of the indices are non-zero, the corresponding $C$ is replaced by zero, otherwise it remains intact). We  have focused on numerical simulations of two-mode initial data
	\beq
	|\alpha_0(0)| \neq 0, \quad |\alpha_1(0)| \neq 0, \quad |\alpha_{n\geq 2}(0)| = 0
	\eeq
for the cases $G = 1/2$ and $1$.
	The blow-up manifests itself as the following asymptotic behavior at large $n$:
	\beq
	\alpha_{n \gg 1}(t) \sim c(t) n^{-\gamma} e^{-\rho(t)n},
	\label{eq:asymptotic_ansatz}
	\eeq
	with $\rho(t) \to 0$ as $t\to t^* < \infty$ and $\gamma=2$ ($\gamma = 5/2$) for $G=1/2$ (for $G=1$). 
	Other systems that we have considered and observed similar phenomena are
	\beq
	C_{nmkl} = (n+m+1)^{G} \qquad \text{with } G> 0,
	\eeq
	as well as their truncated versions. These strong and simple blow-up behaviors beg for an analytic explanation.

	\section*{Acknowledgments}

We are indebted to Piotr Bizo\'n for collaboration on related subjects, numerious discussions, and comments on the manuscript.
This work has been supported by the Polish National Science Centre grant number 2017/26/A/ST2/00530, by CUniverse research promotion project at Chulalongkorn University (grant CUAASC), and by FWO-Vlaanderen through project G006918N.
	
	\appendix
		
	\section*{Appendix A: Elementary analysis of the Lax pair structure}
	\label{appendix:_Lax_pair}

	The Lax pair structure can be re-expressed through only elementary operations acting on the sequences $\{\al_n\}$ and $\{h_n\}$. We would like to verify the Lax pair (\ref{DK}) of the truncated \Sz{} equation
(\ref{eq:resonant_eq_BE}) in this language. To this end, we recall the expression for $u$ given by (\ref{eq:u_variable}) 
	and the operators (\ref{eq:H_T_S_operators}), whose action in components is
	$$
	(H_u h)_n=\sum_{m=0}^\infty \al_{n+m}\bar h_m,\qquad (T_b h)_n=\sum_{m=0}^\infty b_{n-m} h_m, \qquad (Sh)_n=h_{n-1}, \qquad (S^\dagger h)_n=h_{n+1}.
	$$
	We also have the operators
	$$
	K_u = S^{\dagger}H_u, \qquad D_u=-iT_{|u|^2-|S^\dagger u|^2}.
	$$
	 We first note that
	$$
	|u|^2-|S^\dagger u|^2=\sum_{k,l=0}^{\infty}\alb_k\al_l e^{i(l-k)\te}-\sum_{k,l=1}^{\infty}\alb_k\al_l e^{i(l-k)\te}=\alb_0\sum_{k=0}^\infty \al_k e^{ik\te }+\al_0\sum_{k=1}^\infty \alb_k e^{-ik\te }.
	$$
	Using this expression, the action of the Lax pair on a test vector $h_n$ with $n\ge 0$ can be written as
	$$
	[K_uh]_n=\sum_{m=0}^\infty \al_{n+m+1} \bar h_m,\qquad [D_uh]_n=-i\alb_0\sum_{k=0}^n\al_{k}h_{n-k}-i\al_0\sum_{k=1}^\infty \bar{\alpha}_k h_{n+k}.
	$$
	Then, since $K$ does not depend on $\al_0$, one can use the equation of motion for $\al_{n\ge 1}$
	$$
	i\dot\al_n=\alb_0\sum_{k=0}^n \al_k \al_{n-k} +2\al_0\sum_{k=1}^\infty \alb_{k}\al_{n+k},
	$$
	and the antilinearity of $K_u$ to write
	\begin{align*}
	i[(\dot K_u - D_uK_u + K_uD_u)h]_n&=\alb_0 \sum_{m=0}^\infty \sum_{k=0}^{n+m+1}\al_{n+m+1-k}\al_k\bar h_m
	+2\al_0\sum_{m=0}^\infty\sum_{k=1}^\infty \alb_k\al_{n+m+k+1}\bar h_m\nonumber\\
	&-\alb_0\sum_{k=0}^n\sum_{m=0}^\infty \al_k\al_{n-k+m+1}\bar h_m
	-\al_0\sum_{k=1}^\infty\sum_{m=0}^\infty \alb_k\al_{n+m+k+1}\bar h_m\nonumber\\
	&-\al_0\sum_{m=0}^\infty\sum_{k=0}^m\al_{n+m+1}\alb_{k}\bar h_{m-k}
	-\alb_0\sum_{m=0}^\infty\sum_{k=1}^\infty\al_{n+m+1}\al_k\bar h_{m+k}.\nonumber
	\end{align*}
	We transform the first term in the last line as
	$$
	\sum_{m=0}^\infty\sum_{k=0}^m\al_{n+m+1}\alb_{m-k}\bar h_{k}=\sum_{k=0}^\infty\sum_{m=k}^\infty \al_{n+m+1}\alb_{m-k}\bar h_{k}=\sum_{k,m=0}^\infty \al_{n+m+k+1}\alb_{m}\bar h_{k},
	$$
	and the last term as
	\begin{align*}
	&\sum_{k=1}^\infty\sum_{m=0}^\infty\al_{n+m+1}\al_k\bar h_{m+k}=\sum_{k=1}^\infty \sum_{m=k}^\infty\al_{n+m-k+1}\al_k\bar h_{m}\\
&\hspace{5cm}=\sum_{m=1}^\infty \sum_{k=1}^m \al_{n+m-k+1}\al_k\bar h_{m}=\sum_{m=1}^\infty \sum_{k=n+1}^{n+m}\al_k \al_{n+m-k+1}\bar h_{m}.
	\end{align*}
	Then, combining all the terms proportional to $\alb_0$ in the above expression for $i[(\dot K_u - D_uK_u + K_uD_u)h]_n$ yields
	\begin{align*}
	\alb_0&\left(\sum_{m=0}^\infty \sum_{k=0}^{n+m+1}\al_{n+m+1-k}\al_k\bar h_m
	-\sum_{k=0}^n\sum_{m=0}^\infty \al_k\al_{n-k+m+1}\bar h_m
	-\sum_{m=1}^\infty \sum_{k=n+1}^{n+m}\al_k \al_{n+m-k+1}\bar h_{m}\right)\nonumber\\
	&=|\al_0|^2\sum_{m=0}^\infty \al_{n+m+1}\bar h_m,\nonumber
	\end{align*}
	while combining all the terms proportional to $\al_0$ yields
	\begin{align*}
	\al_0&\left(2\sum_{m=0}^\infty\sum_{k=1}^\infty \alb_k\al_{n+m+k+1}\bar h_m
	-\sum_{k=1}^\infty\sum_{m=0}^\infty \alb_k\al_{n+m+k+1}\bar h_m
	-\sum_{k,m=0}^\infty \al_{n+m+k+1}\alb_{k}\bar h_{m}\right)\nonumber\\
	&=-|\al_0|^2\sum_{m=0}^\infty \al_{n+m+1}\bar h_m.\nonumber
	\end{align*}
	Altogether,
	$$
	\frac{dK_u}{dt}=[D_u,K_u],
	$$
	so the validity of the Lax pair has been verified.

	
	\section*{Appendix B: Further deformations of the cubic \Sz{} equation}
	\label{sec:additional_deformations}
	We shall now briefly address additional deformations of the \cuSz, besides the $\alpha$-deformation (\ref{eq:alpha_Szego_v1}) proposed in \cite{Xu} and the $\beta$-deformation we introduced in section~\ref{sec:beta_Szego}, that preserve the relevant features such as Lax integrability and invariant manifolds, and/or exhibit unbounded Sobolev norm growth. We can propose an explicit  family of the interaction coefficients (\ref{eq:delta_gamma_deformations_v2}) that meets such demands, which can be freely combined with the $\al$-deformation, given by a single linear term as in (\ref{eq:alpha_Szego_v1}). One could engineer some other modifications of the cubic terms or the linear part; however, they can be reduced to the ones above and the $\alpha$-deformation by scaling and the transformation $\alpha_n \to e^{i (\theta_1 +\theta_2 n)t}$. Thus, the number of independent deformations for the \cuSz{} that we consider here is five, one for the linear part, $\alpha$, and four for the cubic terms (\ref{eq:delta_gamma_deformations_v2}). We are not going to analyze these models in detail; nevertheless, following the procedure of the previous sections, one can arrive at the properties listed below:
	\begin{itemize}
		\item The five deformations $\alpha,\beta,\gamma,\delta_1$ and $\delta_2$ have the following position space representation, with $u$ given by (\ref{eq:u_variable}):
		\begin{align}
		i\dot u = &\Pi(|u|^2u)-\beta S\Pi(|S^\dagger u|^2 S^\dagger u) + \alpha \left(u|1\right) + \tilde\gamma|(u|1)|^2 (u|1) 
		\label{eq:shenlong_Szego}\\
		&+ 2|(u|1)|^2(\tilde\delta_1-i\tilde\delta_2 \partial_\te)u+ 2(u|1)(\tilde\delta_1|u|^2-i\tilde\delta_2 \bar{u} \del_{\theta}u|1),\nonumber
		\end{align}
		where the tildes over the parameters indicate that linear redefinitions have been made compared to the parameters introduced in (\ref{eq:delta_gamma_deformations_v2}).
		
		\item The system (\ref{eq:shenlong_Szego}) admits the following Lax pair structure ($\mathbb{I}$ denotes the identity):
		\beq
		\frac{dK_{u}}{dt} = \left[A_u, K_u\right],
		\eeq
		\beq
		A_u = C_u-\beta B_{S^\dagger u} - i\tilde\delta_1|(u|1)|^2\mathbb{I} - i \tilde\delta_2 |(u|1)|^2 (-2i\del_{\theta} + \mathbb{I}).
		\eeq 
		Note that, when verifying the Lax pair, the last two terms in the first line of (\ref{eq:shenlong_Szego}) and the second half of the second line do not contribute to $dK_u/dt$, since they are annihilated by $S^\dagger$.
		\item There exist complex invariant manifolds $\mathcal{L}\left(D\right)$ given in (\ref{eq:L_odd}).
		
		\item There are values of parameters such that Sobolev norms of any $u_0\in\mathcal{L}\left(1\right)$ remain bounded,
		\beq
		\forall s\geq0, \|u(t)\|_{H^s}\leq C, \quad \text{with } C>0.
		\eeq
		\item There are values of parameters such that, for some $u_0\in\mathcal{L}\left(1\right)$, Sobolev norms with $s>1/2$ are unbounded,
		\beq
		\forall s>\frac{1}{2}, \qquad \|u(t)\|_{H^s} \underset{t\to\infty}{\to}\infty.
		\eeq
		
		\item The $\gamma$ deformation is a particular case of a more general deformation briefly mentioned by Xu at the end of the second reference in \cite{Xu}, where the Hamiltonian 
		\beq
		\Ham = \frac{1}{4}\|u\|_{L^4}^{4} + \frac{1}{2} F\left(|\left(u|1\right)|^2\right)
		\label{eq:alpha_deformation_extension}
		\eeq
		is proposed. Our case corresponds to $F(x) = \gamma x^2/4$, which is the same as modifying $C_{0000}$ as in (\ref{eq:delta_gamma_deformations_v2}), and the result respects the general structure of resonant systems (\ref{eq:flow_eq_linear_spectrum}). To the best of our knowledge, turbulent properties of this model have not been previously investigated. 
		
		\item The $\de_1$ and $\de_2$ deformations can be thought of as an $(N,E)$-dependent redefinition\\ $\alpha\to \alpha+\de_1 N+\de_2 E$ in the $\alpha$-deformation, where $N$ and $E$ are the conserved quantities given by (\ref{eq:N_consered_quantity}-\ref{eq:E_consered_quantity}). Due to the conservation of $N$ and $E$, this essentially amounts to relabelling the trajectories of the $\alpha$-\Sz{} system at different values of $\al$. Note, however, that the $\de_1$ and $\de_2$ deformations keep the system within the resonant class (\ref{eq:flow_eq_linear_spectrum}), while the $\al$-deformation does not.
	\end{itemize}
	
	It would be interesting to study more systematically which deformations of the \cuSz{} respect the invariant manifolds $\mathcal{L}\left(D\right)$ or admit a Lax pair based on the operator $K_u$, but we shall not pursue it here. Similar type of analysis was performed in \cite{BBE} for a related question, namely, which resonant systems of the form (\ref{eq:flow_eq_linear_spectrum}) respect another explicitly defined three-dimensional invariant manifold, different from $\mathcal{L}\left(1\right)$.

	

\end{document}